\newtheorem{thm}{Theorem}[section]
\newtheorem{cor}{Corollary}[section]
\newtheorem{lem}{Lemma}[section]
\newtheorem{rem}{Remark}[section]
\theoremstyle{Problem}
\theoremstyle{definition}
\numberwithin{equation}{section}
\newcommand{\pp}{\mathbb{P}}
\newcommand{\ee}{\mathbb{E}}
\newcommand{\FF}{\mathcal{F}}
\newcommand{\rr}{\mathbb{R}}
\def\beq{\begin{equation}}
\def\deq{\end{equation}}
\begin{document}

\title[Berry-Esseen bound of modularity in network]
{Berry-Esseen bound of modularity in network}
\thanks{This work is supported by National Natural Science Foundation of China (NSFC-11971154).}

\author[Y. Miao]{Yu Miao}
\address[Y. Miao]{College of Mathematics and Information Science, Henan Normal University, Henan Province, 453007, China; Henan Engineering Laboratory for Big Data Statistical Analysis and Optimal Control, Henan Normal University, Henan Province, 453007, China.} \email{\href{mailto: Y. Miao
<yumiao728@gmail.com>}{yumiao728@gmail.com}; \href{mailto: Y. Miao <yumiao728@126.com>}{yumiao728@126.com}}

\author[Q. Yin]{Qing Yin}
\address[Q. Yin]{College of Mathematics and Information Science, Henan Normal University, Henan Province, 453007, China.}
\email{\href{mailto: Q. Yin
<qingyin1282@163.com>}{qingyin1282@163.com}}

\begin{abstract} In this paper, the model is a specific partition of a given network. Berry-Esseen bound and strong law of large numbers of modularity for the partition are proved when the size of the network gets large.
\end{abstract}

\keywords{Modularity; Berry-Esseen bound; Strong law of large numbers.}

\subjclass[2020]{05C82, 60F05}
\maketitle

\section{Introduction}
Networks have been the focus of much recent attention since they describe a multitude of complex systems found in many fields. Existing networks often display a high level of local inhomogeneity, with high edge density within certain groups of nodes and low edge density between these groups.
The desire to divide the network into communities, one of the most relevant features representing real systems is community structure.

Due to the importance of finding community structures in networks, there has been work on this topic in  such fields as computer science, physics, statistics, sociology, and many others (see \cite{A-B,Jackson,Newman,N-10}). Fortunato \cite{Fortunato} presented some striking examples of real networks with community structure. In this way they saw what communities look like and why they are important. After the detection of communities, it is an important issue to assess their statistical significance.
In order to distinguish meaningful structural changes from random fluctuations, Rosvall and Bergstrom \cite{R-B} provided a solution to this problem by using bootstrap resampling accompanied by significance clustering. Lancichinetti et al. \cite{L-F-R} introduced a measure aimed at quantifying the statistical significance of single communities.
Zhang and Chen \cite{Z-C} re-examined the null model in the Newman-Girvan modularity function and provide a statistical framework for modularity-based community detection. Based on it, they introduced a hypothesis testing procedure to determine the significance of the partitions obtained from maximizing the modularity function. They showed that the modularity formulated under our framework is consistent under a degree-corrected stochastic block model framework.
Ma and Barnett \cite{M-B} proved that the largest eigenvalue and modularity are asymptotically uncorrelated, which suggests the need for inference directly on modularity itself when the network is large. Weighted networks with signed edges such as correlation networks can be well-modelled by Gaussian orthogonal ensemble random matrices under a variety of null models.

Li and Qi \cite{L-Q} proposed a way of evaluating the significance of any given partition by considering whether this particular partition can arise simply from randomness under the assumption that there is no underlying community structure in the network. They established a specific partition of a given network and established that the distribution of modularity under a null hypothesis of free labeling is asymptotically normal when the size of the network gets large. The significance of the partition is defined based on this asymptotic distribution, which can help assess its goodness. Two different partitions can also be compared statistically. Simulation studies and real data analyses are performed for illustration. The model for a specific partition of a given network is as follows.

Consider an undirected graph $G$ consisting of $n$ vertices $\{v_1, v_2, \cdots, v_n\}$ and $m$ edges $\{e_1, e_2, \cdots, e_m\}$. Let $k_i(n)$ denote the degree of vertex $v_i$, which is the number of edges connected to vertex $v_i$. In order to simplify the notation, we write $k_i$ instead of $k_i(n)$. Then it holds that
$$
\sum_{i=1}^{n}k_i=2m, \ \ \ \ \text{for} \ \ 1\le i\le n.
$$
Let $A_{ij}$ be the number of edges between vertex $v_i$ and vertex $v_j$, for $1\le i, j\le n$.
In the paper, we discuss a simple graph, for which $A_{ij}$ is $0$ or $1$, and $A_{ii}=0$.
So we have
$$
k_i=\sum_{j=1}^{n}A_{ij}=\sum_{j=1}^{n}A_{ji}, \ \ \ \text{for} \ \ \ 1\leq i\leq n.
$$

Let $C$ denote a partition of network $G$ (using the existing community detection method, see Fortunato \cite{Fortunato}), i.e., each vertex $v_i\ (1\leq i\leq n)$ is associated with a group label or color $c_i\in \{1, 2, \cdots, K\}$, where $K$ is the total number of communities by the partition, and we denote $C=(c_1, c_2, \cdots, c_n)$.

Newman \cite{N-06} introduced the following modularity of the partition $C$,
\beq\label{1-1}
Q_n(C)=\frac{1}{2m}\sum_{i,j}\left(A_{ij}-\frac{k_ik_j}{2m}\right)\delta_{c_{i},c_{j}}
=\frac{1}{2m}\sum_{i,j}B_{ij}\delta_{c_{i},c_{j}},
\deq
where $\delta_{c_{i},c_{j}}$ is the Kronecker delta function which takes value 1 if vertices $i$ and $j$ are in the same group, i.e., $c_i=c_j$, and zero otherwise. In addition,
\beq\label{h1}
B_{ij}=A_{ij}-\frac{k_ik_j}{2m},\ \ \ \ 1\le i,j\le n.
\deq
It is not difficult to check that $-1<Q_n(C)<1$, and $Q_n(C)$ is the weighted sum of $B_{ij}$ over all pairs of vertices $i$, $j$ that fall in the same groups. It measures the extent to which vertices of the same type are connected to each other in a network.

For a given partition $C$ of the network, we are interested in whether this partition could be obtained by randomly assigning colors to the vertices. The global null hypothesis $H_0$ is that the colors are assigned to vertices randomly, regardless of the structure of the network. The probability that a given vertex is labeled as group $1$ is $p_1=|Col(1)|/n$, where $Col(1)$ is the cardinality of the set of vertices with color $1$; the probability is $p_2=|Col(2)|/n$ for group $2$, and so on. For any $1\leq k\leq K$, it is easy to check that
$$
p_1+p_2+\cdots+p_K=1, \ \ \ \ p_k\ge0.
$$
The labeling of different vertices is assumed to be independent so $H_0$ is also called {\em free labeling}.

Assume that the partition $C=(c_1,c_2,\ldots,c_n)$ is a random vector, where $c_1,c_2,\ldots,c_n$ are independent identically distribution random variables, and have the following distribution
$$
\pp(c_i=j)=p_j, \ \ \ \ 1\leq j\leq K.
$$
Denote
\beq\label{e-e}
p_{(l)}=\sum_{k=1}^{K}p_k^l, \ \ \ \text{for} \ \ \ l=1, 2, \cdots
\deq
and
\beq\label{b-1}
\bar{h}(c_i,c_j)=\delta_{c_i,c_j}-p_{c_{i}}-p_{c_{j}}+p_{(2)}, \ \ \ 1\le i\neq j\le n.
\deq
In this case, we denote $Q_n(C)$ by $Q_n$ to avoid confusion. Li and Qi \cite{L-Q} proved the following asymptotic normality of $Q_n$ under some conditions:
 \begin{align}\label{clt}
 \frac{Q_n-\mu_n}{\sigma_n}\xrightarrow{d}N(0,1),
 \end{align}
 where $\mu_n$ and $\sigma_n^2$ are given by
\beq\label{2-3}
\mu_n=\ee[Q_n]=-\frac{1-p_{(2)}}{4m^2}\sum_{i=1}^nk_i^2,
\deq
\beq\label{2-3-1}
\sigma_n^2=Var(Q_n)=\frac{p_{(2)}+p_{(2)}^2-2p_{(3)}}{2m^2}\sum_{1\leq i\neq j\leq n}B_{ij}^2+\frac{p_{(3)}-p_{(2)}^2}{m^2}\sum_{i=1}^nB_{ii}^2.
\deq
Yin et al. \cite{M-Y-W-Y} proved the moderate deviation principle of the modularity estimator for the specific partition of a given network. Based on the above results, we are interested in the Berry-Esseen bound of modularity in network. The Berry-Esseen theorem and its extensions are of great significance in probability and statistics. The following is one version of the celebrated Berry-Esseen theorem, discovered by Berry \cite{Berry} and Esseen \cite{Esseen}.
\begin{thm}\label{thm1-1}
Let $\{X_i,1\le i\le n\}$ be a sequence of independent random variable with $\ee X_n=0,\ \ee X_n^2=1$ and bounded third moments: $\sup_{1\le n<\infty}\ee|X_n|^3\le \rho$. Then for all $n$,
$$
\sup_{-\infty<x<\infty}\Bigg|\pp\left(n^{-1/2}\sum_{i=1}^nX_i\le x\right)-\Phi(x)\Bigg|\le C n^{-1/2}\rho,
$$
where $\Phi$ denotes the standard normal distribution function and $C$ is an absolute constant.
\end{thm}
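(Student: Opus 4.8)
Theorem~\ref{thm1-1} is the classical Berry--Esseen inequality, so the plan is to reproduce the standard characteristic-function argument of Berry \cite{Berry} and Esseen \cite{Esseen}. The two ingredients are (i) an Esseen-type smoothing inequality, which converts a uniform bound on distribution functions into an $L^1$ bound on the difference of characteristic functions near the origin, and (ii) a quantitative estimate of the characteristic function of the normalized sum, obtained from third-moment Taylor expansion.

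First I would record Esseen's smoothing lemma: if $F$ is a distribution function and $G$ a differentiable function with $G(-\infty)=0$, $G(+\infty)=1$ and $\sup_x|G'(x)|\le M$, then for every $T>0$,
\[
\sup_{x}|F(x)-G(x)|\le \frac{1}{\pi}\int_{-T}^{T}\left|\frac{\hat F(t)-\hat G(t)}{t}\right|\,dt+\frac{C_1 M}{T},
\]
where $\hat F,\hat G$ denote the corresponding Fourier--Stieltjes transforms. I apply this with $F=F_n$ the distribution function of $W_n:=n^{-1/2}\sum_{i=1}^n X_i$ and $G=\Phi$ (so $M=(2\pi)^{-1/2}$ and $\hat\Phi(t)=e^{-t^2/2}$), with $T$ to be chosen below. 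Writing $\varphi_i(t)=\ee e^{itX_i}$, the characteristic function of $W_n$ is $f_n(t)=\prod_{i=1}^n\varphi_i(t/\sqrt n)$, so the whole problem reduces to bounding $\int_{-T}^{T}|f_n(t)-e^{-t^2/2}|/|t|\,dt$.

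Next I would estimate $f_n$. Since $\ee X_i=0$, $\ee X_i^2=1$ and $\ee|X_i|^3\le\rho$ (and necessarily $\rho\ge1$ by Jensen), Taylor's theorem gives $\varphi_i(s)=1-\tfrac{s^2}{2}+\theta_i(s)$ with $|\theta_i(s)|\le \rho|s|^3/6$; hence for $|t|\le c_0\sqrt n/\rho$ with a suitable absolute $c_0$ one has $|\varphi_i(t/\sqrt n)-1|\le 1/2$. On this range I would combine the product-difference inequality $\bigl|\prod a_j-\prod b_j\bigr|\le\sum|a_j-b_j|$ for complex numbers of modulus $\le1$ with the elementary bounds $|\log(1+z)-z|\le|z|^2$ and $|e^z-1|\le|z|e^{|z|}$ to obtain
\[
\bigl|f_n(t)-e^{-t^2/2}\bigr|\le C_2\,\frac{\rho\,|t|^3}{\sqrt n}\,e^{-t^2/3},\qquad |t|\le \frac{c_0\sqrt n}{\rho}.
\]
Choosing $T=c_0\sqrt n/\rho$, the integral in the smoothing lemma is then at most $C_2\rho n^{-1/2}\int_{\rr}t^2e^{-t^2/3}\,dt=C_3\rho n^{-1/2}$, while the remainder term is $C_1M/T=C_1M\rho/(c_0\sqrt n)=O(\rho n^{-1/2})$. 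Adding the two contributions yields $\sup_x|F_n(x)-\Phi(x)|\le C\rho n^{-1/2}$ with an absolute constant $C$, which is the claim.

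The step I expect to require the most care is the uniform control of $f_n(t)$ over the \emph{entire} interval $|t|\le T$: one must check that the cutoff $T=c_0\sqrt n/\rho$ is simultaneously small enough that the Taylor and logarithm estimates are valid for every factor $\varphi_i(t/\sqrt n)$, and large enough that the leftover term $C_1M/T$ stays of order $\rho n^{-1/2}$ --- this is precisely where the normalization $\ee X_i^2=1$ and the bound $\ee|X_i|^3\le\rho$ are used in tandem. A secondary technical point is to pass from the individual $\varphi_i$ to the product $\prod_i\varphi_i$ without picking up a factor growing in $n$, which the product-difference inequality above handles cleanly; everything else is routine estimation of Gaussian-type integrals.
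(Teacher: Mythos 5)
The paper does not actually prove Theorem~\ref{thm1-1}: it is quoted verbatim as the classical Berry--Esseen theorem, with the proof delegated to the references \cite{Berry} and \cite{Esseen}. So there is no in-paper argument to compare against; what I can assess is whether your sketch is a viable reconstruction of the classical proof. It is: the combination of Esseen's smoothing inequality with a third-moment Taylor expansion of the characteristic functions, the cutoff $T=c_0\sqrt{n}/\rho$, and the observation $\rho\ge 1$ is exactly the standard route, and it correctly covers the non-identically-distributed setting of the statement (each $\ee X_i^2=1$, uniformly bounded third moments).

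One step in your outline is stated too loosely to stand as written. The bare product-difference inequality $\bigl|\prod a_j-\prod b_j\bigr|\le\sum_j|a_j-b_j|$ (for $|a_j|,|b_j|\le 1$) applied to $a_j=\varphi_j(t/\sqrt n)$ and $b_j=e^{-t^2/(2n)}$ yields only $|f_n(t)-e^{-t^2/2}|\le C\rho|t|^3/\sqrt n$ \emph{without} the factor $e^{-t^2/3}$, and with that weaker bound the integral $\int_{-T}^{T}|f_n(t)-e^{-t^2/2}|\,|t|^{-1}dt$ is of order $\rho T^3/\sqrt n\asymp n/\rho^2$, not $\rho/\sqrt n$ --- the whole estimate collapses. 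The Gaussian decay factor is therefore not a refinement but the load-bearing part of the characteristic-function lemma, and it must come from the logarithmic route you mention only in passing: on $|t|\le c_0\sqrt n/\rho$ one first shows $|\varphi_i(t/\sqrt n)|\le e^{-t^2/(3n)}$ (using $\ee X_i^2=1$ and the third-moment control of the error in $1-t^2/(2n)$), and only then compares $\sum_i\log\varphi_i(t/\sqrt n)$ with $-t^2/2$, or uses the refined product inequality $\bigl|\prod a_j-\prod b_j\bigr|\le\sum_j|a_j-b_j|\prod_{k\ne j}\max(|a_k|,|b_k|)$. With that correction spelled out, the argument closes and the rest of your sketch (choice of $T$, Gaussian integral, absolute constants) is routine and correct.
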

There is a very extensive literature relating to rates of convergence in the central limit theorem for sums of independent random variables. Comprehensive accounts are given in, for example, Gnedenko and Kolmogorov \cite{G-K}, Ibragimov and Linnik \cite{I-L} and Petrov \cite{Petrov}. The results provide a neat and accurate estimate of the error term in the statistician's normal approximations, and the rate of convergence of order $n^{-1/2}$ is often fast enough to justify his testing procedures. However, a fast rate of convergence can only be achieved by imposing some type of restriction on the condition variances. Hall and Heyde \cite{H-H} established a rate of convergence of almost $n^{-1/4}$.
In addition, Jiang \cite{Jiang} established Berry-Esseen bound for martingale array.
His result on the uniform convergence rate is the same as Hall and Heyde \cite{H-H}, but presented in the form of a martingale array.

The paper is organized as follows, we study the Berry-Esseen bound and strong law of large numbers of modularity in network when the size of the network gets large. Our approach is based on Berry-Esseen bound for martingale array due to Jiang \cite{Jiang}. Our main results are stated and discussed in Section 2. In Section 3, we state some important lemmas that will be used in this paper. Section 4 contains the proofs of main results. The symbol $H$ denotes a positive constant which is not necessarily the same one in each appearance.

\section{Main results}
In this section, we state the main results of the paper.
Denote
\beq\label{b1}
\delta_n=\left(\frac{p_{(2)}+p_{(2)}^2-2p_{(3)}}{m}\right)^{1/2}.
\deq

\begin{thm}\label{je}
Assume that the degree sequence $\{k_i,1\le i\le n\}$ satisfies the following conditions:
\beq\label{3-1}
\frac{\max_{1\le i\le n}k_i}{\sqrt{m}}\le Hn^{-1/2}
\deq
and
\beq\label{3-1-1}
\frac{1}{m^2}\sum_{1\le i,j\le n}\left(\sum_{l=1}^nA_{il}A_{jl}\right)^2\le Hn^{-5/4}\left(\log n\right)^{5}.
\deq
Then for $n\geq2$, we have
$$
\sup_{-\infty<x<\infty}\left|\pp\left(\frac{Q_n-\mu_n}{\delta_n}\leq x\right)-\Phi(x)\right|
\le Hn^{-1/4}\log n.
$$
\end{thm}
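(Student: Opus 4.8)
The strategy is to exhibit $Q_n-\mu_n$ as a sum of martingale differences and then apply the Berry--Esseen bound for martingale arrays due to Jiang \cite{Jiang} (which will be recorded as a lemma in Section 3), the two hypotheses \eqref{3-1} and \eqref{3-1-1} being calibrated precisely so that the two principal error terms in that bound are each of order $n^{-1/4}\log n$. First I would set up the martingale. Since $A_{ii}=0$, the term $\sum_iB_{ii}\delta_{c_i,c_i}=-\frac1{2m}\sum_ik_i^2$ is non-random, while $\sum_jB_{ij}=k_i-k_i=0$ yields $\sum_{j\neq i}B_{ij}=-B_{ii}=k_i^2/(2m)$. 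Using the identity $\delta_{c_i,c_j}-p_{(2)}=(p_{c_i}-p_{(2)})+(p_{c_j}-p_{(2)})+\bar h(c_i,c_j)$ (cf.\ \eqref{b-1}) together with $B_{ij}=B_{ji}$, one gets the Hoeffding-type decomposition
\[
Q_n-\mu_n=\frac1{2m}\sum_{i\neq j}B_{ij}\big(\delta_{c_i,c_j}-p_{(2)}\big)=\frac1{2m^2}\sum_{i=1}^nk_i^2\big(p_{c_i}-p_{(2)}\big)+\frac1m\sum_{1\le i<j\le n}B_{ij}\,\bar h(c_i,c_j).
\]
Setting $\FF_\ell=\sigma(c_1,\dots,c_\ell)$ and
\[
\xi_\ell:=\frac{k_\ell^2}{2m^2}\big(p_{c_\ell}-p_{(2)}\big)+\frac1m\sum_{i=1}^{\ell-1}B_{i\ell}\,\bar h(c_i,c_\ell),\qquad 1\le\ell\le n,
\]
the identities $\ee[p_{c_\ell}]=p_{(2)}$ and $\ee[\bar h(c_i,c_\ell)\mid c_i]=0$ for $i<\ell$ show that $\{\xi_\ell,\FF_\ell\}_{\ell=1}^n$ is a martingale difference sequence with $Q_n-\mu_n=\sum_{\ell=1}^n\xi_\ell$. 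I put $X_{n,\ell}:=\xi_\ell/\delta_n$, so everything reduces to a Berry--Esseen estimate for the normalized martingale $\sum_\ell X_{n,\ell}$.

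Jiang's bound is governed by three quantities: the total variance $\sum_\ell\ee X_{n,\ell}^2$, a Lyapunov sum such as $\sum_\ell\ee|X_{n,\ell}|^3$, and the fluctuation of the conditional variance $V_n^2:=\sum_\ell\ee[X_{n,\ell}^2\mid\FF_{\ell-1}]$. For the first, orthogonality of the $\xi_\ell$ gives $\sum_\ell\ee X_{n,\ell}^2=\sigma_n^2/\delta_n^2$, and expanding $\sum_{i\neq j}B_{ij}^2=2m-\tfrac1m\sum_ik_i\sum_jA_{ij}k_j+\tfrac1{4m^2}\big((\sum_ik_i^2)^2-\sum_ik_i^4\big)$ and controlling the last two terms by \eqref{3-1} (using $\sum_jA_{ij}k_j\le(\max_jk_j)k_i$ and $\sum_ik_i=2m$) shows, via \eqref{2-3-1} and \eqref{b1}, that $\sigma_n^2/\delta_n^2=1+O(n^{-1})$; note also that \eqref{3-1} forces $1\le\max_ik_i\le H\sqrt{m/n}$, hence $m\asymp n$. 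For the Lyapunov sum, conditionally on $c_\ell$ the $\bar h(c_i,c_\ell)$, $i<\ell$, are i.i.d., centred and bounded, so Rosenthal's (or Marcinkiewicz--Zygmund's) inequality gives $\ee|\xi_\ell|^3\le Hm^{-3}\big(\sum_iB_{i\ell}^2\big)^{3/2}\le Hm^{-3}k_\ell^{3/2}$ (the lower-order pieces of $B_{i\ell}$ and the linear part of $\xi_\ell$ being negligible by \eqref{3-1}); summing over $\ell$ and using \eqref{3-1} with $\sum_ik_i=2m$ yields $\sum_\ell\ee|X_{n,\ell}|^3=\delta_n^{-3}\sum_\ell\ee|\xi_\ell|^3\le Hn^{-1/2}$, so this term contributes $O(n^{-1/4})$ to Jiang's bound.

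The main obstacle is the conditional variance, which is exactly where \eqref{3-1-1} is needed and where the genuine work lies. Writing $\phi(a,b):=\ee_c[\bar h(a,c)\bar h(b,c)]$, one has $\ee[\xi_\ell^2\mid\FF_{\ell-1}]=\frac1{m^2}\sum_{i,i'<\ell}B_{i\ell}B_{i'\ell}\phi(c_i,c_{i'})$ up to terms coming from the linear part, and a short computation gives $\ee V_n^2=\sigma_n^2/\delta_n^2=1+O(n^{-1})$. The key structural fact is that $\bar h$ and $\phi$ are \emph{completely degenerate}, i.e.\ $\ee[\bar h(c_i,c_j)\mid c_i]=0$ and $\ee_b\,\phi(a,b)=0$. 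Consequently, when $\ee\big[(V_n^2-\ee V_n^2)^2\big]$ is expanded, all cross terms vanish unless the unordered pairs of lower summation indices coincide, and the estimate collapses to
\[
\ee\big[(V_n^2-\ee V_n^2)^2\big]\le\frac{H}{\delta_n^4m^4}\sum_{1\le i\neq i'\le n}\Big(\sum_{\ell=1}^nB_{i\ell}B_{i'\ell}\Big)^2+(\text{lower order}).
\]
Since $\delta_n^4m^4=\big(p_{(2)}+p_{(2)}^2-2p_{(3)}\big)^2m^2$ and $\sum_\ell B_{i\ell}B_{i'\ell}=\sum_\ell A_{i\ell}A_{i'\ell}+(\text{terms dominated via \eqref{3-1}})$ is, up to negligible corrections, the number of common neighbours of $v_i$ and $v_{i'}$, the leading term is a fixed multiple of $\frac1{m^2}\sum_{i,j}\big(\sum_\ell A_{i\ell}A_{j\ell}\big)^2$, so \eqref{3-1-1} gives $\ee(V_n^2-1)^2\le Hn^{-5/4}(\log n)^5$. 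I expect the real work here to be (i) the combinatorics of which index-coincidence patterns survive the double degeneracy, (ii) tracking the corrections generated by the $k_ik_j/(2m)$ part of $B_{ij}$ and by the linear term $\frac{k_\ell^2}{2m^2}(p_{c_\ell}-p_{(2)})$ of $\xi_\ell$, and (iii) checking, with the help of \eqref{3-1}, that all such corrections are $o\big(n^{-5/4}(\log n)^5\big)$.

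Finally, feeding $\sum_\ell\ee|X_{n,\ell}|^3\le Hn^{-1/2}$ and $\ee(V_n^2-1)^2\le Hn^{-5/4}(\log n)^5$ into the martingale Berry--Esseen bound of \cite{Jiang} — whose error term is a suitable power of the Lyapunov sum plus the one-fifth power of $\ee(V_n^2-1)^2$ — gives
\[
\sup_{x\in\rr}\left|\pp\left(\frac{Q_n-\mu_n}{\delta_n}\le x\right)-\Phi(x)\right|\le H\Big(n^{-1/4}+\big(n^{-5/4}(\log n)^5\big)^{1/5}\Big)\le Hn^{-1/4}\log n.
\]
Thus the $\log n$ in the statement is exactly the $(\log n)^5$ of \eqref{3-1-1} raised to the power $1/5$; the exponents in \eqref{3-1} and \eqref{3-1-1} are tuned precisely so that the Lyapunov term and the conditional-variance term in Jiang's bound balance at order $n^{-1/4}\log n$.
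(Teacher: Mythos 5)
Your overall architecture is the right one and matches the paper's: write $Q_n-\mu_n$ via the Hoeffding-type decomposition into a degenerate pair sum plus a linear term, organize it as a martingale in $\ell$, and use hypothesis (\ref{3-1-1}) to control $\ee\big[(V_n^2-1)^2\big]$, the common-neighbour counts $\sum_l A_{il}A_{jl}$ entering exactly as you predict. There are, however, two substantive discrepancies, one of which is a genuine gap.

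The gap is in the final step, where you feed your estimates into ``Jiang's bound.'' The lemma actually used in the paper (Jiang, Theorem 8.8) is the bounded-increment version: it requires $\max_i|X_{ni}|\le Mn^{-1/2}$ almost surely together with a \emph{tail} bound $\pp\big(|V_n^2-1|>9M^2Dn^{-1/2}(\log n)^2\big)\le Hn^{-1/4}\log n$, and contains no Lyapunov sum at all. The hybrid bound you invoke --- a ``suitable power'' of $\sum_\ell\ee|X_{n\ell}|^3$ plus $\big(\ee(V_n^2-1)^2\big)^{1/5}$ --- is not a standard theorem, and the genuine moment-form bounds (Heyde--Brown, Haeusler) cannot deliver the claimed rate: they give $C\big(\sum_\ell\ee|X_{n\ell}|^{2p}+\ee|V_n^2-1|^p\big)^{1/(2p+1)}$, and since $\ee|X_{n\ell}|^{2p}\ge(\ee X_{n\ell}^2)^p$ forces $\sum_\ell\ee|X_{n\ell}|^{2p}\ge n^{1-p}$ whenever $\sum_\ell\ee X_{n\ell}^2\asymp1$, the best achievable rate from this family is $n^{-(p-1)/(2p+1)}$, i.e.\ $n^{-1/5}$ at $p=2$ and $n^{-1/8}$ at $p=3/2$ --- never $n^{-1/4}\log n$. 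Your third-moment computation, while essentially correct, is therefore pointed at the wrong lemma. The fix is available in your own setup: under (\ref{3-1}) your increments do satisfy $|X_{n\ell}|\le Hn^{-1/2}$ a.s.\ (since $\sum_{i<\ell}|B_{i\ell}|\le 2k_\ell$), so you should verify that hypothesis instead of a Lyapunov condition, and then obtain the required tail bound on $V_n^2-1$ by Chebyshev: $\pp\big(|V_n^2-1|>Hn^{-1/2}(\log n)^2\big)\le H\,n(\log n)^{-4}\,\ee\big[(V_n^2-1)^2\big]\le Hn^{-1/4}\log n$. The identity $\big(n^{-5/4}(\log n)^5\big)^{1/5}=n^{-1/4}\log n$ is a numerical coincidence; the mechanism is Chebyshev, not a fifth root.

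The second difference is organizational but worth flagging. You keep the full $B_{i\ell}$ and the linear term $\tfrac{k_\ell^2}{2m^2}(p_{c_\ell}-p_{(2)})$ inside one martingale, and defer to items (i)--(iii) the bookkeeping of all the resulting corrections in the conditional variance. The paper instead first strips off $\tfrac{1}{2m\delta_n}\sum_{i<j}\tfrac{k_ik_j}{m}\bar h(c_i,c_j)$ and $\tfrac{1}{2m\delta_n}\sum_i\tfrac{k_i^2}{m}(p_{c_i}-p_{(2)})$ using the smoothing inequality of Chang--Rao (Lemma 3.3) with threshold $a_n=n^{-3/8}(\log n)^{-1/2}$, disposing of them by Chebyshev together with (\ref{3-1}); the martingale to which Jiang's theorem is applied then involves only $\sum_{i<\ell}A_{i\ell}\bar h(c_i,c_\ell)$, so that $\ee V_n^2=1$ exactly and $\ee\big[(V_n^2-1)^2\big]$ reduces cleanly to $\tfrac{H}{m^2}\sum_{i,j}\big(\sum_lA_{il}A_{jl}\big)^2$ with no correction terms to track. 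Your route can be made to work, but the work you postpone in (i)--(iii) is precisely what the Chang--Rao splitting avoids, and your claim $\sigma_n^2/\delta_n^2=1+O(n^{-1})$ is too optimistic (the paper only gets, and only needs, $O(n^{-1/2})$).
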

\begin{rem}\label{rem3-1}
If (\ref{3-1}) and (\ref{3-1-1}) hold, then we have
$$
\lim_{n\rightarrow\infty}\frac{1}{m^3}\left(\sum_{i=1}^nk_i^2\right)^2=0
$$
and
$$
\lim_{n\rightarrow\infty}\frac{1}{m^2}\sum_{1\le i,j\le n}\left(\sum_{l=1}^nA_{il}A_{jl}\right)^2=0.
$$
Therefore, the conclusion (\ref{clt}) holds if (\ref{3-1}) and (\ref{3-1-1}) are satisfied (see (2.6) of \cite{M-Y-W-Y} and Theorem 1 of Li and Qi \cite{L-Q}).
\end{rem}
\begin{rem}\label{rem2-11}
A sufficient condition for (\ref{3-1-1}) is
\beq\label{c-1}
\frac{\max_{1\le i\le n}k_i}{\sqrt{m}}\le n^{-5/8}\left(\log n\right)^{5/2}.
\deq
In fact, we deduce
$$
\aligned
\frac{1}{m^2}\sum_{1\le i,j\le n}\left(\sum_{l=1}^nA_{il}A_{jl}\right)^2
&\le\frac{1}{m^2}\sum_{1\le i,j\le n}k_i\sum_{l=1}^nA_{il}A_{jl}\\
&=\frac{1}{m^2}\sum_{1\le i\le n}\sum_{1\le l\le n}k_iA_{il}\sum_{1\le j\le n}A_{jl}\\
&\le \frac{1}{m^2}\max_{1\le j\le n}k_j \sum_{1\le i\le n}k_i\sum_{1\le l\le n}A_{il}\\
&\le \frac{2\max_{1\le j\le n}k_j^2}{m}.
\endaligned
$$
If
$$
\frac{\max_{1\le i\le n}k_i}{\sqrt{m}}\le n^{-5/8}\left(\log n\right)^{5/2},
$$
then (\ref{3-1-1}) holds. Moreover, there exists a positive integer $N_0$, for any $n>N_0$, we conclude that (\ref{c-1}) is also sufficient for (\ref{3-1}).
\end{rem}

\begin{rem}\label{rem3-3}
For the network $G$ considered in the present paper, since $A_{ii}=0$ and $A_{ij}\in\{0,1\}$ for $i\neq j$ we have
$$
0\le m\le n(n-1)/2\ \ \text{and}\ \ 0\le \max\limits_{1\le i\le n}k_i\le n-1.
$$
Notice that, if $\max\limits_{1\le i\le n}k_i\sim \sqrt{n}$, then
$$
\frac{\max_{1\le i\le n}k_i}{\sqrt{m}}\ge \frac{H\sqrt{n}}{\sqrt{n(n-1)}}\ge Hn^{-1/2}\ge H n^{-5/8}\left(\log n\right)^{5/2},
$$
so the conditions (\ref{3-1}) and (\ref{c-1}) do not hold in this case.
\end{rem}

\begin{cor}\label{cor2-1}
Assume that the conditions (\ref{3-1}) and (\ref{3-1-1}) are satisfied. Then for $n\geq2$, we have
$$
\sup_{-\infty<x<\infty}\left|\pp\left(\frac{Q_n-\mu_n}{\sigma_n}\leq x\right)-\Phi(x)\right|
\le Hn^{-1/4}\log n.
$$
\end{cor}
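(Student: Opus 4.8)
The plan is to deduce Corollary~\ref{cor2-1} from Theorem~\ref{je} by a perturbation of the normalizing constant: I would first show that $\sigma_n$ and $\delta_n$ agree up to a relative error which is of strictly smaller order than the target rate $n^{-1/4}\log n$, and then transfer the Berry--Esseen estimate, replacing $\delta_n$ by $\sigma_n$.

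\emph{Step 1 (comparing $\sigma_n^2$ with $\delta_n^2$).} Using $B_{ij}=A_{ij}-k_ik_j/(2m)$ together with $A_{ij}^2=A_{ij}$ for $i\ne j$, $A_{ii}=0$ and $\sum_{i=1}^n k_i=2m$, an elementary computation gives
$$\sum_{1\le i\ne j\le n}B_{ij}^2=2m-\frac{1}{m}\sum_{1\le i,j\le n}A_{ij}k_ik_j+\frac{1}{4m^2}\Big[\big(\sum_{i=1}^n k_i^2\big)^2-\sum_{i=1}^n k_i^4\Big],\qquad \sum_{i=1}^n B_{ii}^2=\frac{1}{4m^2}\sum_{i=1}^n k_i^4 .$$
Substituting these into (\ref{2-3-1}) and dividing by $\delta_n^2=(p_{(2)}+p_{(2)}^2-2p_{(3)})/m$ from (\ref{b1}) leads to an exact identity
$$\frac{\sigma_n^2}{\delta_n^2}=1-\frac{1}{2m^2}\sum_{1\le i,j\le n}A_{ij}k_ik_j+\frac{1}{8m^3}\Big(\sum_{i=1}^n k_i^2\Big)^2+\frac{c_K}{m^3}\sum_{i=1}^n k_i^4,$$
where $c_K$ is a constant depending only on $p_1,\dots,p_K$. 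Now $\sum_{i,j}A_{ij}k_ik_j\le 2m(\max_i k_i)^2$, $\big(\sum_i k_i^2\big)^2\le 4m^2(\max_i k_i)^2$, $\sum_i k_i^4\le 2m(\max_i k_i)^3$, and $\max_i k_i\le\sum_i k_i=2m$, so each of the three correction terms is $O\big((\max_i k_i)^2/m\big)$, which by condition~(\ref{3-1}) is $O(n^{-1})$. Consequently $\big|\sigma_n^2/\delta_n^2-1\big|\le Hn^{-1}$; in particular $\sigma_n>0$ for all $n$ large, and $\big|\sigma_n/\delta_n-1\big|\le\big|\sigma_n^2/\delta_n^2-1\big|\le Hn^{-1}\le Hn^{-1/4}\log n$.

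\emph{Step 2 (transferring the bound).} For each $x\in\rr$ we have, since $\delta_n,\sigma_n>0$,
$$\pp\Big(\frac{Q_n-\mu_n}{\sigma_n}\le x\Big)=\pp\Big(\frac{Q_n-\mu_n}{\delta_n}\le\frac{\sigma_n}{\delta_n}x\Big),$$
hence
$$\Big|\pp\Big(\frac{Q_n-\mu_n}{\sigma_n}\le x\Big)-\Phi(x)\Big|\le\Big|\pp\Big(\frac{Q_n-\mu_n}{\delta_n}\le\frac{\sigma_n}{\delta_n}x\Big)-\Phi\Big(\frac{\sigma_n}{\delta_n}x\Big)\Big|+\Big|\Phi\Big(\frac{\sigma_n}{\delta_n}x\Big)-\Phi(x)\Big| .$$
The first term on the right is at most $Hn^{-1/4}\log n$ by Theorem~\ref{je} (applied at the point $(\sigma_n/\delta_n)x$ and then taking the supremum over the argument). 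For the second term: because $\sigma_n/\delta_n\to1$, for large $n$ it stays in a fixed compact subinterval of $(0,\infty)$, so the mean value theorem produces a point between $x$ and $(\sigma_n/\delta_n)x$ of absolute value at least $|x|/2$, and therefore, using $\sup_{u\in\rr}|u|\varphi(u)=(2\pi e)^{-1/2}<\infty$ with $\varphi$ the standard normal density, $\big|\Phi((\sigma_n/\delta_n)x)-\Phi(x)\big|\le H\big|\sigma_n/\delta_n-1\big|$ uniformly in $x$. Combining this with the estimate of $|\sigma_n/\delta_n-1|$ from Step~1 and taking the supremum over $x$ gives the corollary; the finitely many small values of $n$ are absorbed into the constant $H$.

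The only mildly delicate point is the algebraic bookkeeping in Step~1 --- one must verify that the relative perturbation $\sigma_n^2/\delta_n^2-1$ is genuinely of smaller order than the target rate $n^{-1/4}\log n$ --- but this is immediate from the degree condition~(\ref{3-1}); note that condition~(\ref{3-1-1}) enters only through the application of Theorem~\ref{je}. Everything else is the standard device for replacing one normalizing sequence by an asymptotically equivalent one in a Berry--Esseen estimate, so no probabilistic input beyond Theorem~\ref{je} is required.
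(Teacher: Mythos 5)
Your proof is correct, and it takes a genuinely different (and shorter) route than the paper. The paper's own proof of Corollary~\ref{cor2-1} re-runs the entire argument of Theorem~\ref{je} with $\sigma_n$ in place of $\delta_n$: it re-decomposes $(Q_n-\mu_n)/\sigma_n$, re-applies Lemma~\ref{lem5}, and re-verifies the two hypotheses of Jiang's martingale Berry--Esseen theorem (Lemma~\ref{lem2-6}) for the rescaled martingale $E_n=(\delta_n/\sigma_n)T_n$, using an intermediate estimate $|\delta_n^2/\sigma_n^2-1|\le Hn^{-1/2}$ obtained via Cauchy--Schwarz. You instead treat Theorem~\ref{je} as a black box and use the standard device for replacing a normalizing sequence: the identity $\pp((Q_n-\mu_n)/\sigma_n\le x)=\pp((Q_n-\mu_n)/\delta_n\le(\sigma_n/\delta_n)x)$ plus the uniform Lipschitz-type bound $\sup_x|\Phi(cx)-\Phi(x)|\le H|c-1|$ for $c$ near $1$. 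Your algebra in Step~1 checks out (the exact expansion of $\sum_{i\ne j}B_{ij}^2$ and $\sum_i B_{ii}^2$ is right, and each correction term is indeed $O((\max_i k_i)^2/m)=O(n^{-1})$ under~(\ref{3-1}); your direct bound $\sum_{i,j}A_{ij}k_ik_j\le 2m(\max_i k_i)^2$ even improves on the paper's $n^{-1/2}$ to $n^{-1}$, though either suffices). Your approach buys brevity and modularity --- no probabilistic input beyond Theorem~\ref{je} is needed, and it makes transparent that the only issue is the deterministic rate at which $\sigma_n/\delta_n\to1$; the paper's approach is more self-contained in the sense that it exhibits $E_n$ itself as a martingale array satisfying Jiang's hypotheses, but at the cost of repeating essentially every estimate. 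The only points worth being explicit about (as the paper also is, implicitly) are that $\sigma_n>0$ is needed for the statement to be well-posed and that the finitely many small $n$ are absorbed into $H$; you note both.
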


\begin{thm}\label{thm2-2}
Assume that $\{b_n,n\ge1\}$ is a sequence of positive constants such that
\beq\label{p}
\frac{b_n\log n}{\sqrt{m}}\to 0 \ \ \ \text{as}\ \ \ n\to\infty
\deq
and the conditions (\ref{3-1}) and (\ref{3-1-1}) hold.
Then we have
\beq\label{e}
b_n\left(Q_n-\mu_n\right)\xrightarrow{a.s.}0.
\deq
\end{thm}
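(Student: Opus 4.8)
The plan is to combine the Doob--martingale representation of $Q_n-\mu_n$ (the one already underlying the proof of Theorem \ref{je}) with the Azuma--Hoeffding exponential inequality, and then invoke the Borel--Cantelli lemma; condition (\ref{p}) is exactly what makes the resulting deviation probabilities summable in $n$.

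Fix $\varepsilon>0$. Put $\mathcal F_{ni}=\sigma(c_1,\dots,c_i)$ and $M_{ni}=\ee[Q_n\mid\mathcal F_{ni}]$, so $\{M_{ni}\}_{0\le i\le n}$ is a martingale with $M_{n0}=\mu_n$ and $M_{nn}=Q_n$. Since $B_{ii}$ is nonrandom, changing a single colour $c_i$ alters $2mQ_n$ by at most $2\sum_{j}|B_{ij}|$, and $\sum_j|B_{ij}|\le\sum_jA_{ij}+\frac{k_i}{2m}\sum_jk_j=2k_i$; hence the bounded differences are controlled, $|M_{ni}-M_{n,i-1}|\le 2k_i/m$. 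Therefore
$$
\sum_{i=1}^n\Big(\frac{2k_i}{m}\Big)^2=\frac{4}{m^2}\sum_{i=1}^nk_i^2\le\frac{4}{m^2}\Big(\max_{1\le i\le n}k_i\Big)\sum_{i=1}^nk_i=\frac{8\max_{1\le i\le n}k_i}{m}\le\frac{8H}{\sqrt{mn}},
$$
the last inequality by (\ref{3-1}). By Azuma--Hoeffding, for every $t>0$,
\beq\label{prop-azuma}
\pp\big(|Q_n-\mu_n|\ge t\big)\le 2\exp\!\Big(-\frac{\sqrt{mn}\,t^2}{16H}\Big).
\deq

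To finish I would take $t=\varepsilon/b_n$: it suffices to show $\sqrt{mn}/b_n^2\ge C\log n$ for every fixed $C$ and all large $n$, since then the right-hand side of (\ref{prop-azuma}) is summable and Borel--Cantelli gives $b_n|Q_n-\mu_n|\le\varepsilon$ eventually, a.s. Here I use a second, elementary consequence of (\ref{3-1}): $2m=\sum_{i=1}^nk_i\le n\max_{1\le i\le n}k_i\le Hn\sqrt{m/n}=H\sqrt{mn}$, whence $m\le Hn$ and $\sqrt{mn}=\sqrt m\sqrt n\ge H^{-1/2}m$. Thus $\sqrt{mn}/b_n^2\ge H^{-1/2}m/b_n^2$, while (\ref{p}) --- i.e.\ $b_n^2(\log n)^2/m\to0$ --- forces $m/b_n^2\gg(\log n)^2$; hence $\sqrt{mn}/b_n^2\gg(\log n)^2$, far more than is needed. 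Consequently $\sum_n\pp(b_n|Q_n-\mu_n|>\varepsilon)<\infty$; applying this for $\varepsilon$ running through a sequence tending to $0$ yields (\ref{e}).

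The key point --- and essentially the only place where care is required --- is twofold. First, the Berry--Esseen bound of Theorem \ref{je} is by itself too weak here, its error term $n^{-1/4}\log n$ not being summable, so one must pass to a genuine exponential martingale inequality. Second, the increments of the Doob martingale must be small enough, which rests on $\sum_j|B_{ij}|\le 2k_i$ together with the (easily overlooked) fact that (\ref{3-1}) forces the network to be sparse, $m=O(n)$. A Bernstein/Freedman-type refinement, replacing $\sum_i(2k_i/m)^2$ by the sum of conditional variances $\sum_i\ee[(M_{ni}-M_{n,i-1})^2\mid\mathcal F_{n,i-1}]$ --- which is where condition (\ref{3-1-1}) enters, through control of $\sum_{i,j}\big(\sum_lA_{il}A_{jl}\big)^2$ --- leads to the same conclusion with the exact normalisation $\delta_n^2\sim\sigma_n^2$, should one prefer to keep that refinement visible.
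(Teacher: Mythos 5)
Your proof is correct, but it follows a genuinely different route from the paper's. The paper first decomposes $Q_n-\mu_n$ (via the identity used in the proof of Theorem \ref{je}) into a degenerate $U$-statistic $\frac1m\sum_{i<j}A_{ij}\bar h(c_i,c_j)$, a second $U$-statistic with kernel $\frac{k_ik_j}{2m}\bar h(c_i,c_j)$, and a linear term; it then handles the first piece with the Gin\'e--Lata{\l}a--Zinn moment inequality turned into an exponential tail bound (Lemmas \ref{lem2-2}--\ref{lem2-3-1}) plus Borel--Cantelli, the second by Chebyshev plus Borel--Cantelli, and the third deterministically. You instead treat $Q_n$ directly as a function of the independent colours, bound the Doob martingale increments by $2k_i/m$ via $\sum_j|B_{ij}|\le 2k_i$, and apply Azuma--Hoeffding once; the only extra ingredient is your (correct and easily verified) observation that (\ref{3-1}) forces $2m\le n\max_ik_i\le H\sqrt{mn}$, hence $\sqrt{mn}\ge cm$, so that the Azuma exponent $\varepsilon^2\sqrt{mn}/(16Hb_n^2)$ dominates $c\varepsilon^2 m/b_n^2\gg(\log n)^2$ by (\ref{p}). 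Your argument is shorter and unified, avoiding both the decomposition and the $U$-statistic machinery; the paper's retains the degenerate-kernel structure, which is what one would need for sharper rates. You are also right on two side points: the Berry--Esseen rate $n^{-1/4}\log n$ of Theorem \ref{je} is not summable, so a genuine exponential inequality is unavoidable; and condition (\ref{3-1-1}), though listed among the hypotheses, is not actually used --- neither in your proof nor, as far as I can see, in the paper's. The closing remark about a Freedman-type refinement is optional and not needed for the conclusion.
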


\section{Preliminary lemmas}
In the proofs of main results, we make use of the following lemmas.

\begin{lem}\label{lem2-2}$($\cite[(2.18)]{G-L-Z}$)$
Let $\{X_i,1\le i\le n\}$ be a sequence of independent identically distributed random variables, and for any $1\le i,j\le n$, $h_{i,j}(u,v): \rr^2\to \rr$ be measurable and symmetric with respect to its arguments. For any $1\le i\ne j\le n$, assume that $\ee(h_{i,j}(X_i,X_j)|X_j)=0$, $\ee(h_{i,j}(X_i,X_j)|X_i)=0$ and $\ee|h_{i,j}(X_1,X_2)|^p<\infty$ for some $p\ge 2$. Then we have
$$
\ee\left|\sum_{1\le i< j \le n}h_{i,j}(X_i,X_j)\right|^p\le 4^{p}p^p\ee \left(\sum_{1\le i< j \le n}h_{i,j}^2(X_i,X_j)\right)^{p/2}.
$$
\end{lem}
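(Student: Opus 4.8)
The plan is to realise $S:=\sum_{1\le i<j\le n} h_{i,j}(X_i,X_j)$ as a sum of martingale differences and then apply a Burkholder-type inequality twice. First I would fix the natural filtration $\FF_k=\sigma(X_1,\dots,X_k)$ and use the two degeneracy hypotheses to compute the conditional expectations of each summand: for $i<j$ the term $h_{i,j}(X_i,X_j)$ is $\FF_k$-measurable when $j\le k$, has conditional expectation $\ee(h_{i,j}(X_i,X_j)\mid X_i)=0$ when $i\le k<j$, and has expectation $\ee\,h_{i,j}(X_1,X_2)=0$ when $k<i$. Hence $\ee(S\mid\FF_k)=\sum_{1\le i<j\le k}h_{i,j}(X_i,X_j)$, and the increments
$$
D_k:=\ee(S\mid\FF_k)-\ee(S\mid\FF_{k-1})=\sum_{i=1}^{k-1}h_{i,k}(X_i,X_k),\qquad 2\le k\le n,
$$
form a martingale difference array with $S=\sum_{k=2}^n D_k$; the martingale property $\ee(D_k\mid\FF_{k-1})=\sum_{i<k}\ee(h_{i,k}(X_i,X_k)\mid X_i)=0$ is again immediate from degeneracy.

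Second, I would apply the Burkholder--Davis--Gundy inequality to this martingale to obtain, for $p\ge2$, a bound of the form $\ee|S|^p\le B_p^{\,p}\,\ee\big(\sum_{k=2}^n D_k^2\big)^{p/2}$, where the constant $B_p$ is of order $\sqrt p$. The remaining task is to pass from the square function $\sum_k D_k^2$ to the target quantity $\sum_{1\le i<j\le n}h_{i,j}^2$. Expanding $D_k^2=\big(\sum_{i<k}h_{i,k}\big)^2$ produces a diagonal part $\sum_k\sum_{i<k}h_{i,k}^2=\sum_{1\le i<j\le n}h_{i,j}^2$, which is exactly what we want, plus an off-diagonal part $\sum_k\sum_{i\ne i'<k}h_{i,k}h_{i',k}$. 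Here the second degeneracy condition $\ee(h_{i,k}(X_i,X_k)\mid X_k)=0$ is crucial: conditionally on $X_k$ the variables $\{h_{i,k}(X_i,X_k)\}_{i<k}$ are independent and centred, so the off-diagonal part is itself a centred sum to which a second, conditional Burkholder-type inequality applies. Iterating the inequality in this way reabsorbs the off-diagonal term into $\ee\big(\sum_{i<j}h_{i,j}^2\big)^{p/2}$ at the cost of one further factor of order $\sqrt p$; multiplying the two $\sqrt p$-order constants yields the announced $4^p p^p=(4p)^p$.

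The main obstacle is precisely this second step: controlling the $L^{p/2}$-norm of the off-diagonal (higher-order chaos) term and bookkeeping the absolute constants so that the product of the two Burkholder constants is no larger than $4p$. I expect the cleanest rigorous route to be an induction on the order of the summation, conditioning on the ``last'' revealed variable at each stage so that both degeneracy identities $\ee(h_{i,j}\mid X_i)=\ee(h_{i,j}\mid X_j)=0$ (which are interchangeable by the assumed symmetry of $h_{i,j}$) can be invoked, each reduction peeling off one conditional Burkholder factor. The $p=2$ case, where orthogonality makes every off-diagonal expectation vanish and gives the exact identity $\ee S^2=\sum_{i<j}\ee h_{i,j}^2$, serves both as the base case of the induction and as a consistency check on the constant.
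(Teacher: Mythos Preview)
The paper does not prove this lemma; it is quoted verbatim as inequality~(2.18) of Gin\'e--Lata{\l}a--Zinn \cite{G-L-Z} and used as a black box. So there is no ``paper's own proof'' to compare against.

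That said, your sketch is the correct strategy and is essentially how the result is obtained in \cite{G-L-Z} (and, more generally, in the decoupling literature): write the degenerate $U$-statistic as a martingale in the natural filtration, apply a Burkholder-type inequality to pass to the square function $\sum_k D_k^2$, then exploit the second degeneracy condition to treat each $D_k=\sum_{i<k}h_{i,k}(X_i,X_k)$, conditionally on $X_k$, as a sum of independent centred variables and apply a second (conditional) Burkholder/Khintchine step. Two iterations, each contributing a constant of order $\sqrt{p}$, combine to give the $(4p)^p$ on the right-hand side. Your identification of the ``main obstacle''---tracking the constants through the second iteration so that the product does not exceed $4p$---is accurate; this is precisely where the work in \cite{G-L-Z} lies, and it is not something one can simply assert without carrying out the estimate. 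If you want to turn your outline into a self-contained proof, the cleanest route is to bound $\bigl\|\sum_k D_k^2\bigr\|_{p/2}$ via Minkowski and then, for each $k$, apply the conditional martingale inequality to $D_k$ given $X_k$; this avoids the inductive framing you mention and makes the two $\sqrt{p}$ factors explicit.
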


\begin{lem}\label{lem2-3-1}
Under the conditions in Lemma \ref{lem2-2}, if there exist positive constants $\{a_{i,j}, 1\le i,j\le n\}$, such that
$|h_{i,j}(u,v)|\le a_{i,j}$ for all $1\le i,j\le n$ and all $u,v \in\rr$, then for any $x>8eD_n$, we have
$$
\pp\left(\bigg|\sum_{1\le i< j \le n}h_{i,j}(X_i,X_j)\bigg|>x\right)\le \exp\left\{-\frac{x}{4eD_n}\right\},
$$
where
$$
D_n= \left(\sum_{1\le i< j \le n}a_{i,j}^2\right)^{1/2}.
$$
\end{lem}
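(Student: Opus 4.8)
The plan is to combine the moment bound of Lemma \ref{lem2-2} with Markov's inequality and then optimise over the moment order. Write $S_n=\sum_{1\le i<j\le n}h_{i,j}(X_i,X_j)$. Since $|h_{i,j}(u,v)|\le a_{i,j}$ for all $u,v$, we have $h_{i,j}^2(X_i,X_j)\le a_{i,j}^2$ pointwise, so for every real $p\ge 2$,
$$
\ee\left(\sum_{1\le i<j\le n}h_{i,j}^2(X_i,X_j)\right)^{p/2}\le\left(\sum_{1\le i<j\le n}a_{i,j}^2\right)^{p/2}=D_n^p.
$$
Plugging this into the inequality of Lemma \ref{lem2-2} yields $\ee|S_n|^p\le 4^pp^pD_n^p=(4pD_n)^p$ for every $p\ge2$.

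Next I would apply Markov's inequality to $|S_n|^p$: for any $p\ge2$ and any $x>0$,
$$
\pp(|S_n|>x)\le\frac{\ee|S_n|^p}{x^p}\le\left(\frac{4pD_n}{x}\right)^p.
$$
Now choose $p=x/(4eD_n)$. The hypothesis $x>8eD_n$ guarantees $p>2$, so this value is admissible in the displayed bound. With this choice $4pD_n/x=1/e$, and hence
$$
\pp(|S_n|>x)\le\left(\frac{1}{e}\right)^{x/(4eD_n)}=\exp\left\{-\frac{x}{4eD_n}\right\},
$$
which is the desired estimate.

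The only point requiring attention is the admissibility of the optimising exponent: Lemma \ref{lem2-2} as quoted holds for all real $p\ge2$, and the threshold $x>8eD_n$ is precisely what forces $x/(4eD_n)>2$, so no integrality issue arises. (If one only had the moment inequality for integer $p$, one would instead take $p=\lfloor x/(4eD_n)\rfloor$ and absorb the rounding into the constant, but this is unnecessary here.) I do not expect any genuine obstacle; the statement is a routine exponentiation of the polynomial moment bound, and the whole argument is essentially the classical passage from Rosenthal-type moment inequalities to subexponential tail bounds.
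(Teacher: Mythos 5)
Your proof is correct and follows exactly the same route as the paper: bound the conditional sum of squares by $D_n^2$ pointwise, apply Lemma \ref{lem2-2} to get $\ee|S_n|^p\le(4pD_n)^p$, and optimise Markov's inequality at $p=x/(4eD_n)$, with $x>8eD_n$ ensuring $p>2$. No differences worth noting.
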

\begin{proof} From Lemma \ref{lem2-2}, we have
$$
\ee\left|\sum_{1\le i< j \le n}h_{i,j}(X_i,X_j)\right|^p\le 4^{p}p^p \left(\sum_{1\le i< j \le n}a_{i,j}^2\right)^{p/2}=:(4D_n)^{p}p^p.
$$
Let $p=x/(4eD_n)$ for any $x>8eD_n$. Then, by the Markov's inequality, we have
$$
\pp\left(\bigg|\sum_{1\le i< j \le n}h_{i,j}(X_i,X_j)\bigg|>x\right)\le \exp\left\{-\frac{x}{4eD_n}\right\}.
$$
\end{proof}

\begin{lem}\label{lem5} $($\cite[Lemma 2]{C-R}$)$
For any random variables $X$, $Y$, a real number $x$ and a constant $a>0$,
$$
\sup_{x}\big|\pp(X+Y\leq x)-\Phi(x)\big|\leq\sup_{x}\big|\pp(X\leq x)-\Phi(x)\big|+\frac{a}{\sqrt{2\pi}}+\pp\left(|Y|>a\right),
$$
where $\Phi(x)$ is the standard normal distribution.
\end{lem}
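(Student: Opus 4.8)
The plan is to prove the inequality by the classical truncation-and-smoothing argument: control the effect of the perturbation $Y$ on the event where it is small, and absorb the complementary event into the tail probability $\pp(|Y|>a)$. Throughout I write $\Phi$ for the standard normal distribution function and use the fact that its density is bounded by $1/\sqrt{2\pi}$, which gives the elementary estimate $|\Phi(x+a)-\Phi(x)|\le a/\sqrt{2\pi}$ for every real $x$ and every $a>0$. This is the only analytic input.

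First I would establish the upper bound. Decomposing over the events $\{|Y|\le a\}$ and $\{|Y|>a\}$ and observing that on $\{|Y|\le a\}$ the inequality $X+Y\le x$ forces $X\le x+a$, I obtain
$$
\pp(X+Y\le x)\le \pp(X\le x+a)+\pp(|Y|>a).
$$
Writing $\pp(X\le x+a)-\Phi(x)=\big[\pp(X\le x+a)-\Phi(x+a)\big]+\big[\Phi(x+a)-\Phi(x)\big]$, bounding the first bracket by $\sup_x|\pp(X\le x)-\Phi(x)|$ and the second by $a/\sqrt{2\pi}$, I arrive at
$$
\pp(X+Y\le x)-\Phi(x)\le \sup_x\big|\pp(X\le x)-\Phi(x)\big|+\frac{a}{\sqrt{2\pi}}+\pp(|Y|>a).
$$

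Next I would establish the matching lower bound. Using the inclusion $\{X\le x-a\}\cap\{|Y|\le a\}\subseteq\{X+Y\le x\}$ gives $\pp(X+Y\le x)\ge \pp(X\le x-a)-\pp(|Y|>a)$, and the analogous split $\pp(X\le x-a)-\Phi(x)=\big[\pp(X\le x-a)-\Phi(x-a)\big]+\big[\Phi(x-a)-\Phi(x)\big]$ together with the density bound yields
$$
\pp(X+Y\le x)-\Phi(x)\ge -\sup_x\big|\pp(X\le x)-\Phi(x)\big|-\frac{a}{\sqrt{2\pi}}-\pp(|Y|>a).
$$
Combining the two one-sided estimates into a bound on $|\pp(X+Y\le x)-\Phi(x)|$ and taking the supremum over $x$ completes the argument.

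There is no serious obstacle here; the argument is entirely routine. The only points demanding a little care are keeping the direction of the shift correct in the two bounds (shifting $x$ up by $a$ for the upper bound and down by $a$ for the lower bound) and checking that the quantity $\sup_x|\pp(X\le x)-\Phi(x)|$ absorbs the shifted Kolmogorov distance in both directions, which it does precisely because that supremum ranges over all real arguments.
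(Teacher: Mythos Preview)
Your proof is correct. The paper does not actually prove this lemma; it merely cites it from Chang and Rao \cite{C-R}. Your truncation argument---splitting on $\{|Y|\le a\}$ and $\{|Y|>a\}$, shifting by $\pm a$, and using the bound $|\Phi(x\pm a)-\Phi(x)|\le a/\sqrt{2\pi}$ from the density---is the standard route to this inequality and goes through exactly as you wrote it.
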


\begin{lem}\label{lem2-6} $($\cite[Theorem 8.8]{Jiang}$)$
Let $\{S_{ni}=\sum_{j=1}^iX_{nj},\ \FF_{ni},\ 1\leq i\leq n\}$ be an array of martingales, where $\FF_{ni}=\sigma(X_{n1},X_{n2},\ldots,X_{ni})$. Let
$$
V_{ni}^2=\sum_{j=1}^i\ee\left(X_{nj}^2|\FF_{n,j-1}\right), \ \ \ 1\le i\le n.
$$
Write $S_n=S_{nn}$ and $V_n^2=V_{nn}^2$. Suppose that
\beq\label{2-6-1}
\max_{1\le i\le n}|X_{ni}|\leq Mn^{-1/2} \ \ a.s.
\deq
and
\beq\label{2-8}
\pp\left(|V_{n}^2-1|>9M^2Dn^{-1/2}(\log n)^2\right)\leq Hn^{-1/4}\log n
\deq
for constants $M,\ H,$ and $D\ (\geq e)$. Then for $n\geq2$,
$$
\sup_{-\infty<x<\infty}\big|\pp(S_{n}\leq x)-\Phi(x)\big|\le\left(2+H+7MD^{1/2}\right)n^{-1/4}\log n,
$$
where $\Phi(x)$ is the standard normal distribution.
\end{lem}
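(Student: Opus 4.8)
The plan is to establish the bound through the classical Fourier route: estimate the characteristic function $\phi_n(t)=\ee\,e^{itS_n}$ and convert to a Kolmogorov-distance bound via the Esseen smoothing inequality. Writing $\sigma_{nj}^2=\ee(X_{nj}^2\mid\FF_{n,j-1})$, so that $V_n^2=\sum_{j=1}^n\sigma_{nj}^2$, the aim is first to show that $\phi_n(t)$ is uniformly close to $\ee\,e^{-t^2V_n^2/2}$ on a range $|t|\le T$, and then to exploit the concentration of $V_n^2$ about $1$ furnished by (\ref{2-8}) to replace $\ee\,e^{-t^2V_n^2/2}$ by $e^{-t^2/2}$. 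The two hypotheses enter separately: the increment bound (\ref{2-6-1}) controls the Taylor (Lyapunov) error in the first step, while the variance-concentration tail (\ref{2-8}) controls the second.

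First I would exploit the martingale structure by peeling off one increment at a time. By the tower property, $\phi_n(t)=\ee[e^{itS_{n,n-1}}\,\ee(e^{itX_{nn}}\mid\FF_{n,n-1})]$, and since $|X_{nj}|\le Mn^{-1/2}$ a.s.\ by (\ref{2-6-1}) together with $\ee(X_{nj}\mid\FF_{n,j-1})=0$, a third-order Taylor expansion gives
$$
\ee(e^{itX_{nj}}\mid\FF_{n,j-1})=1-\tfrac12 t^2\sigma_{nj}^2+r_{nj}(t),\qquad |r_{nj}(t)|\le \tfrac16|t|^3\,\ee(|X_{nj}|^3\mid\FF_{n,j-1})\le \tfrac16|t|^3 Mn^{-1/2}\sigma_{nj}^2 .
$$
Replacing each factor $1-\tfrac12t^2\sigma_{nj}^2$ by $e^{-t^2\sigma_{nj}^2/2}$ and telescoping the resulting product of conditional characteristic functions down to $\ee\,e^{-t^2V_n^2/2}$, I would accumulate an error of order $|t|^3 Mn^{-1/2}$ on the range where the $t^2\sigma_{nj}^2$ are small, uniformly controlled because $\sum_j\sigma_{nj}^2=V_n^2$ stays near $1$ with high probability.

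For the second step I would set $A_n=\{|V_n^2-1|\le\varepsilon_n\}$ with $\varepsilon_n=9M^2Dn^{-1/2}(\log n)^2$, so that $\pp(A_n^c)\le Hn^{-1/4}\log n$ by (\ref{2-8}); this is the source of the $H$-term in the final constant. The surrogate $V_nZ$, with $Z$ an independent standard normal, has characteristic function $\ee\,e^{-t^2V_n^2/2}$, so the telescoping estimate of the first step says exactly that $\sup_x|\pp(S_n\le x)-\pp(V_nZ\le x)|$ is small after Esseen smoothing. It then remains to compare $V_nZ$ with $Z$, which I would do by Lemma \ref{lem5} with $X=Z$ and $Y=(V_n-1)Z$: since $|V_n-1|\le\varepsilon_n$ on $A_n$, choosing $a\asymp\sqrt{\varepsilon_n}=3MD^{1/2}n^{-1/4}\log n$ makes $\pp(|(V_n-1)Z|>a)\le\pp(A_n^c)+\pp(|Z|>a/\varepsilon_n)$ of order $Hn^{-1/4}\log n$ while the term $a/\sqrt{2\pi}$ is of order $MD^{1/2}n^{-1/4}\log n$ — precisely the origin of the terms $H$ and $7MD^{1/2}$ in the final constant. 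Feeding both comparisons into the Esseen inequality $\sup_x|\pp(S_n\le x)-\Phi(x)|\le \pi^{-1}\int_{-T}^{T}|t|^{-1}|\phi_n(t)-e^{-t^2/2}|\,dt+CT^{-1}$ and optimizing the cut-off $T$ then yields the asserted bound $(2+H+7MD^{1/2})n^{-1/4}\log n$.

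The hard part will be the first step. Because the conditional variances $\sigma_{nj}^2$ are themselves $\FF_{n,j-1}$-measurable random variables, the telescoping of the product $\prod_j\ee(e^{itX_{nj}}\mid\FF_{n,j-1})$ cannot be carried out by a deterministic recursion but must be organized as a chain of nested conditional expectations, and one must show that the accumulated Lyapunov error together with the error from replacing the product by $e^{-t^2V_n^2/2}$ stays of order $n^{-1/4}\log n$ after integration against $|t|^{-1}$ over $[-T,T]$; in particular $T$ must be chosen to keep $CT^{-1}$ small while preventing the non-decaying contribution on $A_n^c$ from degrading the rate. An alternative to the whole Fourier computation is Bolthausen's Stein-method argument for martingale arrays, in which $\ee[f'(S_n)-S_nf(S_n)]$ is expanded using $\ee[X_{nj}f(S_{n,j-1})\mid\FF_{n,j-1}]=0$ and the same two hypotheses control the remainder; but the characteristic-function route aligns most directly with the form of the constants in the statement.
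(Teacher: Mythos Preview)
The paper does not prove this lemma at all: it is quoted verbatim as Theorem~8.8 of Jiang's book and used as a black box in the proofs of Theorem~\ref{je} and Corollary~\ref{cor2-1}. So there is no ``paper's own proof'' to compare your proposal against.

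As for the proposal itself, the overall architecture (Esseen smoothing plus the decomposition $\phi_n(t)\approx\ee e^{-t^2V_n^2/2}\approx e^{-t^2/2}$) is the standard one, and your identification of where the constants $H$ and $7MD^{1/2}$ come from is plausible. But the step you flag as ``the hard part'' is genuinely hard, and the sketch as written does not resolve it. In the independent case one has $\phi_n(t)=\prod_j\ee e^{itX_{nj}}$, so replacing each factor by $e^{-t^2\sigma_{nj}^2/2}$ and multiplying gives $e^{-t^2/2}$ directly. For a martingale array the analogue would be to telescope
\[
\ee\bigl[e^{itS_n}\bigr]-\ee\bigl[e^{-t^2V_n^2/2}\bigr]
=\sum_{j=1}^n \ee\Bigl[e^{itS_{n,j-1}}\,e^{-t^2\sum_{l>j}\sigma_{nl}^2/2}\bigl(\ee(e^{itX_{nj}}\mid\FF_{n,j-1})-e^{-t^2\sigma_{nj}^2/2}\bigr)\Bigr],
\]
but the factor $e^{-t^2\sum_{l>j}\sigma_{nl}^2/2}$ is \emph{not} $\FF_{n,j-1}$-measurable (the future conditional variances depend on all of $\FF_{n,n-1}$), so the tower property does not let you pull the inner conditional expectation outside as you implicitly do. The proofs in Hall--Heyde and in Jiang circumvent this by a stopping-time truncation: one defines $\tau$ to be the first index at which the running conditional variance $V_{ni}^2$ exceeds $1$, works with the stopped martingale, and pads the remaining variance with an independent Gaussian piece; this is what makes the telescoping legitimate and is the missing ingredient in your sketch. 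Your alternative suggestion of Bolthausen's Stein-method argument is another valid route, but it, too, requires a stopping-time device for the same reason.
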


\section{Proofs of main results}

\begin{proof}[Proof of Theorem \ref{je}]
Since
$$
\sum_{i=1}^nB_{ij}=\sum_{j=1}^nB_{ij}=0,
$$
we deduce that
$$
\sum_{1\leq i\neq j\leq n}B_{ij}=\sum_{1\leq i,j\leq n}B_{ij}-\sum_{i=1}^nB_{ii}=-\sum_{i=1}^nB_{ii}.
$$
Note that by (\ref{1-1}) and (\ref{b-1}), we have
\beq\label{cc}
\aligned
Q_n&=\frac{1}{2m}\sum_{i=1}^nB_{ii}+\frac{1}{2m}\sum_{1\leq i\neq j\leq n}B_{ij}\bar{h}(c_i,c_j)\\
&\ \ \ \ +\frac{1}{2m}\sum_{1\leq i\neq j\leq n}B_{ij}(p_{c_{i}}+p_{c_{j}})-\frac{p_{(2)}}{2m}\sum_{1\leq i\neq j\leq n}B_{ij}\\
&=\frac{1+p_{(2)}}{2m}\sum_{i=1}^nB_{ii}+\frac{1}{m}\sum_{1\leq i<j\leq n}B_{ij}\bar{h}(c_i,c_j)-\frac{1}{m}\sum_{i=1}^nB_{ii}p_{c_{i}}\\
&=\frac{1-p_{(2)}}{2m}\sum_{i=1}^nB_{ii}+\frac{1}{m}\sum_{1\leq i<j\leq n}B_{ij}\bar{h}(c_i,c_j)-\frac{1}{m}\sum_{i=1}^nB_{ii}(p_{c_{i}}-p_{(2)}).
\endaligned
\deq
Combining (\ref{2-3}), (\ref{b1}) and the above equation together, it holds that
\beq\label{pp-1}
\aligned
\frac{Q_n-\mu_n}{\delta_n}
&=\frac{1}{m\delta_n}\sum_{1\leq i<j\leq n}B_{ij}\bar{h}(c_i,c_j)-\frac{1}{m\delta_n}\sum_{i=1}^nB_{ii}(p_{c_{i}}-p_{(2)})\\
&=\frac{1}{m\delta_n}\sum_{1\leq i<j\leq n}A_{ij}\overline{h}(c_i,c_j)-\frac{1}{2m\delta_n}\sum_{1\leq i<j\leq n}\frac{k_ik_j}{m}\overline{h}(c_i,c_j)\\
&\ \ \ +\frac{1}{2m\delta_n}\sum_{i=1}^n\frac{k_i^2}{m}(p_{c_{i}}-p_{(2)}).
\endaligned
\deq
Let
\beq\label{pp-2}
T_n=\frac{1}{m\delta_n}\sum_{j=1}^n\sum_{i=1}^{j-1}A_{ij}\overline{h}(c_i,c_j)=\frac{1}{m\delta_n}\sum_{j=1}^nz_{nj},
\deq
where $z_{n1}=0$ and $z_{nj}=\sum_{i=1}^{j-1}A_{ij}\overline{h}(c_i,c_j)$ for $2\le j\le n$.
Let $\FF_j=\sigma(c_1,c_2,\ldots,c_j)$ denote the $\sigma$-algebra generated by $\{c_1,c_2,\ldots,c_j\}$ for $1\leq j\leq n$. From (\ref{e-e}) and (\ref{b-1}), for any $1\le i<j\le n$, it is easy to check that
$$
\aligned
\ee\left(\bar{h}(c_i,c_j)|\FF_{j-1}\right)&=\ee\left(\bar{h}(c_i,c_j)|c_i\right)\\
&=\ee\left(\delta_{c_i,c_j}-p_{c_{i}}-p_{c_{j}}+p_{(2)}|c_i\right)\\
&=p_{c_{i}}-p_{c_{i}}-p_{(2)}+p_{(2)}=0,
\endaligned
$$
where
$$
\ee\left(\delta_{c_i,c_j}|c_i\right)=\pp(c_j=c_i)=p_{c_{i}}\ \
\text{and} \ \
\ee\left(p_{c_{j}}|c_i\right)=\ee p_{c_{j}}=\sum_{k=1}^Kp_k^2=p_{(2)}.
$$
Then we have
$$
\ee\left(z_{nj}|\FF_{j-1}\right)=0 \ \ \ \ \text{for}\ \ 2\leq j\leq n.
$$
Therefore, for each $n\geq2$, $\{z_{nj},2\le j\le n\}$ forms a martingale difference with respect to $\{\FF_j\}$. By (\ref{pp-1}), (\ref{pp-2}) and Lemma \ref{lem5}, we have
\beq\label{aa}
\aligned
&\ \ \ \ \sup_{-\infty<x<\infty}\left|\pp\left(\frac{Q_n-\mu_n}{\delta_n}\leq x\right)-\Phi(x)\right|\\
&\leq\sup_{-\infty<x<\infty}\big|\pp\left(T_n\leq x\right)-\Phi(x)\big|+\frac{a_n}{\sqrt{2\pi}}\\
&\ \ \ \ +\pp\left(\bigg|\frac{1}{2m\delta_n}\sum_{1\leq i<j\leq n}\frac{k_ik_j}{m}\overline{h}(c_i,c_j)
-\frac{1}{2m\delta_n}\sum_{i=1}^n\frac{k_i^2}{m}(p_{c_{i}}-p_{(2)})\bigg|>a_n\right),
\endaligned
\deq
where
\beq\label{c}
a_n=n^{-3/8}(\log n)^{-1/2}.
\deq
Now, we consider the first part, applying Lemma \ref{lem2-6} with the following martingale array
$$
T_n=\frac{1}{m\delta_n}\sum_{j=1}^n\sum_{i=1}^{j-1}A_{ij}\overline{h}(c_i,c_j)=\frac{1}{m\delta_n}\sum_{j=1}^nz_{nj}.
$$
Denote
$$
V_n^2=\frac{1}{m^2\delta_n^2}\sum_{j=1}^n\ee\left(z_{nj}^2|\FF_{j-1}\right).
$$
Firstly, we will check the condition (\ref{2-6-1}). By condition (\ref{3-1}) and the fact $\big|\overline{h}(c_i,c_j)\big|\le2$, we can derive
\begin{align*}
\frac{\max_{1\le j\le n}z_{nj}}{m\delta_n}&=\max_{1\le j\le n}\Bigg|\sum_{i=1}^{j-1}\frac{A_{ij}\overline{h}(c_i,c_j)}{m\delta_n}\Bigg|
\le 2\max_{1\le j\le n}\sum_{i=1}^{j-1}\frac{A_{ij}}{m\delta_n}\\
&\le 2\frac{\max_{1\le j\le n}k_j}{m\delta_n}\le Hn^{-1/2}.
\end{align*}
Next, we will check the condition (\ref{2-8}). It follows that
\begin{align}
V_n^2
&= \nonumber \frac{1}{m^2\delta_n^2}\sum_{j=1}^n\ee\left(\left(\sum_{i=1}^{j-1}A_{ij}\overline{h}(c_i,c_j)\right)^2\bigg|\FF_{j-1}\right)\\
&= \nonumber \frac{1}{m^2\delta_n^2}\sum_{j=1}^n\sum_{i=1}^{j-1}
\ee\left(\left(A_{ij}\overline{h}(c_i,c_j)\right)^2\big|\FF_{j-1}\right)\\
&\ \ \  \nonumber + \frac{2}{m^2\delta_n^2}\sum_{j=1}^n\sum_{1\le i<l\le j-1}\ee\left(\left(A_{ij}\overline{h}(c_i,c_j)\right)\left(A_{lj}\overline{h}(c_{l},c_j)\right)|\FF_{j-1}\right)\\
&=\nonumber \frac{1}{m^2\delta_n^2}\sum_{i=1}^{n-1}\sum_{j=i+1}^{n}
A_{ij}^2\ee\left(\left(\overline{h}(c_i,c_j)\right)^2\big|\FF_{j-1}\right)\\
&\ \ \  + \frac{2}{m^2\delta_n^2}\sum_{1\le i<l\le n-1}\sum_{j=l+1}^nA_{ij}A_{lj}\ee\left(\overline{h}(c_i,c_j)\overline{h}(c_{l},c_j)|\FF_{j-1}\right). ~\label{S1-0}
\end{align}
Since $A_{ij}\in\{0,1\}$, we have $A_{ij}^2=A_{ij}$, which together with (\ref{e-e}) and (\ref{b-1}) yield that
\begin{align}
&\ \nonumber\ee\left[\frac{1}{m^2\delta_n^2}\sum_{i=1}^{n-1}\sum_{j=i+1}^{n}A_{ij}^2
\ee\left(\left(\overline{h}(c_i,c_j)\right)^2\big|\FF_{j-1}\right)\right]\\
 =& \nonumber \frac{1}{m^2\delta_n^2} \sum_{i=1}^{n-1}\sum_{j=i+1}^{n}A_{ij}\ee\left[\ee\left(\left(\overline{h}(c_i,c_j)\right)^2\big|\FF_{j-1}\right)\right]\\
 =& \nonumber \frac{1}{m^2\delta_n^2}\sum_{i=1}^{n-1}\sum_{j=i+1}^{n}A_{ij}
 \ee\left[\ee\left(\left(\delta_{c_i,c_j}-p_{c_{i}}-p_{c_{j}}+p_{(2)}\right)^2\big|\FF_{j-1}\right)\right]\\
 =& \nonumber \frac{1}{m^2\delta_n^2}\sum_{i=1}^{n-1}\sum_{j=i+1}^{n}A_{ij}
 \ee\left[\ee\left(\left(\delta_{c_i,c_j}-p_{c_{i}}\right)^2+\left(p_{(2)}-p_{c_{j}}\right)^2 \right.\right.\\
 &\ \ \ \nonumber \left.\left.
 +2\left(\delta_{c_i,c_j}-p_{c_{i}}\right)\left(p_{(2)}-p_{c_{j}}\right)|\FF_{j-1}\right)\right]\\
 =& \nonumber\frac{1}{m^2\delta_n^2}\sum_{i=1}^{n-1}\sum_{j=i+1}^{n}A_{ij}
 \ee\left[\ee\left(\delta_{c_i,c_j}^2+p_{c_{i}}^2-2\delta_{c_i,c_j}p_{c_{i}}+p_{(2)}^2+p_{c_{j}}^2-2p_{(2)}p_{c_{j}} \right.\right.\\
 &\ \ \ \nonumber\left.\left.
 +2\delta_{c_i,c_j}p_{(2)}-2\delta_{c_i,c_j}p_{c_{j}}-2p_{(2)}p_{c_{i}}+2p_{c_{i}}p_{c_{j}}|\FF_{j-1}\right)\right]\\
=&\nonumber\frac{1}{m^2\delta_n^2}\sum_{i=1}^{n-1}\sum_{j=i+1}^{n}A_{ij}\ee\left[ p_{c_{i}}-3p_{c_{i}}^2+2p_{(2)}p_{c_i}-p_{(2)}^2+p_{(3)}\right]\\
=&\nonumber\frac{1}{2m^2\delta_n^2}\left(p_{(2)}+p_{(2)}^2-2p_{(3)}\right)\sum_{1\le i, j\le n}A_{ij}\\
 =&\frac{2m}{2m^2\delta_n^2}\left(p_{(2)}+p_{(2)}^2-2p_{(3)}\right)=1. ~\label{s1-1}
\end{align}
Moreover, for $1\le i<l<j\le n$, by (\ref{e-e}) and (\ref{b-1}), we deduce
\begin{align}
&\ \ \ \ \nonumber\ee\left[\overline{h}(c_i,c_j)\overline{h}(c_{l},c_j)\right]\\
&=\nonumber\ee\left[\ee\left[\left(\delta_{c_i,c_j}-p_{c_i}-p_{c_{j}}+p_{(2)}\right)
\left(\delta_{c_{l},c_j}-p_{c_{l}}-p_{c_{j}}+p_{(2)}\right)|\FF_{j-1}\right]\right]\\
&=\nonumber\ee\left[\ee\Big[\delta_{c_i,c_j}\delta_{c_{l},c_j}-\delta_{c_i,c_j}p_{c_{l}}
-\delta_{c_i,c_j}p_{c_{j}}+\delta_{c_i,c_j}p_{(2)}-\delta_{c_{l},c_j}p_{c_i}+p_{c_i}p_{c_{l}}+p_{c_i}p_{c_{j}} -p_{c_i}p_{(2)} \right.\\
 &\ \ \ \ \ \ \ \ \nonumber\left.  -\delta_{c_{l},c_j}p_{c_{j}}+p_{c_{j}}p_{c_{l}}+p_{c_{j}}^2-p_{c_{j}}p_{(2)}
 +\delta_{c_{l},c_j}p_{(2)}-p_{(2)}p_{c_{l}}-p_{(2)}p_{c_j}+p_{(2)}^2
 |\FF_{j-1}\Big]\right]\\
&=\nonumber\ee\Big[\ee\Big(\delta_{c_i,c_{l}}\frac{p_{c_i}+p_{c_{l}}}{2} -p_{c_i}p_{c_{l}}-p_{c_i}^2+p_{c_i}p_{(2)}-p_{c_i}p_{c_{l}}+p_{c_i}p_{c_{l}}+p_{c_i}p_{(2)}-p_{c_i}p_{(2)} \\
 & \ \ \ \ \ \ \ \ \nonumber \ -p_{c_{l}}^2+p_{c_{l}}p_{(2)}+p_{(3)}-p_{(2)}^2+p_{c_{l}}p_{(2)}
 -p_{c_{l}}p_{(2)}-p_{(2)}^2+p_{(2)}^2|c_i\Big)\Big]\\
&=\nonumber\ee\left[\ee\left(
\delta_{c_i,c_{l}}\frac{p_{c_i}+p_{c_{l}}}{2}-p_{c_i}p_{c_{l}}-p_{c_i}^2
+p_{c_i}p_{(2)}-p_{c_{l}}^2+p_{c_{l}}p_{(2)} +p_{(3)}-p_{(2)}^2|c_i\right)\right]=0,
\end{align}
which implies that
\beq\label{s1-2}
\aligned
&\ \ee\left[\frac{2}{m^2\delta_n^2}\sum_{1\le i<l\le n-1}\sum_{j=l+1}^nA_{ij}A_{lj}\ee\left(\overline{h}(c_i,c_j)\overline{h}(c_{l},c_j)|\FF_{j-1}\right)\right]\\
=&\ \frac{2}{m^2\delta_n^2}\sum_{1\le i<l\le n-1}
\sum_{j=l+1}^nA_{ij}A_{lj}\ee\left[\ee\left(\overline{h}(c_i,c_j)\overline{h}(c_{l},c_j)|\FF_{j-1}\right)\right]\\
=&\ \frac{2}{m^2\delta_n^2}\sum_{1\le i<l\le n-1}\sum_{j=l+1}^nA_{ij}A_{lj}\ee\left(\overline{h}(c_i,c_j)\overline{h}(c_{l},c_j)\right)\\
=&\ 0.
\endaligned
\deq
Substituting (\ref{s1-1}) and (\ref{s1-2}) into (\ref{S1-0}), we have $\ee V_n^2=1$.
It follows, from Markov's inequality, that
\beq\label{s1-3}
\pp\left(|V_{n}^2-1|>Hn^{-1/2}(\log n)^2\right)\le H\frac{n}{(\log n)^4}\ee\left[\left(V_{n}^2-1\right)^2\right].
\deq
Note that
\begin{align}
\ee\left[\left(V_{n}^2-1\right)^2\right]
&=\nonumber\ee\left[\left(\frac{1}{m^2\delta_n^2}\sum_{j=1}^n\ee\left(z_{nj}^2|\FF_{j-1}\right)-1\right)^2\right]\\
&=\nonumber\frac{1}{m^4\delta_n^4}Var\left(\sum_{j=1}^n\ee\left(z_{nj}^2|\FF_{j-1}\right)-m^2\delta_n^2\right)\\
&=\nonumber\frac{1}{m^4\delta_n^4}Var\left(\sum_{j=1}^n\ee\left(\left(\sum_{i=1}^{j-1}
A_{ij}\overline{h}(c_i,c_j)\right)^2\bigg|\FF_{j-1}\right)\right)\\
&=\nonumber\frac{1}{m^4\delta_n^4}Var\left(\sum_{j=1}^n\ee\left(\sum_{i_1=1}^{j-1}\sum_{i_2=1}^{j-1}A_{i_{1}j}A_{i_{2}j}
\overline{h}(c_{i_{1}},c_j)\overline{h}(c_{i_{2}},c_j)\bigg|\FF_{j-1}\right)\right)\\
&=\nonumber\frac{1}{m^4\delta_n^4}Var\left(\sum_{j=1}^n\sum_{i_1=1}^{j-1}\sum_{i_2=1}^{j-1}A_{i_{1}j}A_{i_{2}j}\ee\left(
\overline{h}(c_{i_{1}},c_j)\overline{h}(c_{i_{2}},c_j)|\FF_{j-1}\right)\right)\\
&=\nonumber\frac{1}{m^4\delta_n^4}Var\left(2\sum_{j=1}^n\sum_{1\le i_1<i_2<j-1}A_{i_{1}j}A_{i_{2}j}
\ee\left(\overline{h}(c_{i_{1}},c_j)\overline{h}(c_{i_{2}},c_j)|\FF_{j-1}\right)\right.\\
&\ \ \ \ \nonumber + \left.
\sum_{j=1}^n\sum_{i=1}^{j-1}A_{ij}\ee\left(\left(\overline{h}(c_i,c_j)\right)^2\big|\FF_{j-1}\right)\right)\\
&=\nonumber\frac{1}{m^4\delta_n^4}Var\left(2\sum_{1\le i_1<i_2<n-1}\left(\sum_{j=i_2+1}^nA_{i_{1}j}A_{i_{2}j}\right)
\ee\left(\overline{h}(c_{i_{1}},c_j)\overline{h}(c_{i_{2}},c_j)|\FF_{j-1}\right)\right.\\
&\ \ \ \ \nonumber + \left.\sum_{i=1}^{n-1}\left(\sum_{j=i+1}^{n}A_{ij}\right)
\ee\left(\left(\overline{h}(c_i,c_j)\right)^2\big|\FF_{j-1}\right)\right)\\
&=\nonumber\frac{4}{m^4\delta_n^4}\sum_{1\le i_1<i_2<n-1}\left(\sum_{j=i_2+1}^nA_{i_{1}j}A_{i_{2}j}\right)^2
Var\left(\ee\left(\overline{h}(c_{i_{1}},c_j)\overline{h}(c_{i_{2}},c_j)|\FF_{j-1}\right)\right)\\
&\ \ \ \ + \frac{1}{m^4\delta_n^4}\sum_{i=1}^{n-1}\left(\sum_{j=i+1}^{n}A_{ij}\right)^2
Var\left(\ee\left(\left(\overline{h}(c_i,c_j)\right)^2\big|\FF_{j-1}\right)\right), ~\label{s1-4}
\end{align}
where the last line follows by $\ee\left(\overline{h}(c_{i_{1}},c_j)\overline{h}(c_{i_{2}},c_j)|\FF_{j-1}\right)$ and $\ee(\left(\overline{h}(c_i,c_j)\right)^2|\FF_{j-1})$ are orthogonal.
Putting (\ref{3-1-1}), (\ref{s1-3}), (\ref{s1-4}) and the fact $|\overline{h}(c_i,c_j)|\le 2$ together, we obtain
\beq\label{i-9}
\aligned
&\ \ \ \ \pp\left(|V_{n}^2-1|>Hn^{-1/2}(\log n)^2\right)\\
&\le H\frac{n}{(\log n)^4m^4\delta_n^4}\left(\sum_{1\le i_1<i_2<n-1}\left(\sum_{j=i_2+1}^nA_{i_{1}j}A_{i_{2}j}\right)^2
+\sum_{i=1}^{n-1}\left(\sum_{j=i+1}^{n}A_{ij}\right)^2\right)\\
&\le H\frac{n}{(\log n)^4m^4\delta_n^4}\sum_{1\le i_1,i_2\le n}\left(\sum_{j=1}^nA_{i_{1}j}A_{i_{2}j}\right)^2\\
&\le Hn^{-1/4}\log n.
\endaligned
\deq
Hence, it is straightforward that
\beq\label{h-1}
\sup_{-\infty<x<\infty}\big|\pp(T_{n}\le x)-\Phi(x)\big|\le Hn^{-1/4}\log n.
\deq
Now, we consider the last past,
\begin{align*}
&\ \ \ \ \pp\left(\bigg|\frac{1}{2m\delta_n}\sum_{1\leq i<j\leq n}\frac{k_ik_j}{m}\overline{h}(c_i,c_j)
-\frac{1}{2m\delta_n}\sum_{i=1}^n\frac{k_i^2}{m}(p_{c_{i}}-p_{(2)})\bigg|>a_n\right)\\
&\leq\pp\left(\bigg|\frac{1}{2m\delta_n}\sum_{1\leq i<j\leq n}\frac{k_ik_j}{m}\overline{h}(c_i,c_j)\bigg|>\frac{a_n}{2}\right)\\
&\ \ \ +\pp\left(\bigg|\frac{1}{2m\delta_n}\sum_{i=1}^n\frac{k_i^2}{m}(p_{c_{i}}-p_{(2)})\bigg|>\frac{a_n}{2}\right).
\end{align*}
Together with (\ref{e-e}) and (\ref{b-1}), we can verify that
\begin{align}
Var(p_{c_{i}}-p_{(2)})
&=\nonumber\ee\left[\left(p_{c_{i}}-p_{(2)}\right)^2\right]
-\left[\ee\left(p_{c_{i}}-p_{(2)}\right)\right]^2\\
&=\nonumber\ee\left(p_{c_{i}}^2+p_{(2)}^2-2p_{c_{i}}p_{(2)}\right)-\left(p_{(2)}-p_{(2)}\right)^2\\
&=\nonumber p_{(3)}+p_{(2)}^2-2p_{(2)}^2\\
&= p_{(3)}-p_{(2)}^2 ~\label{b1-0}
\end{align}
and
\begin{align}
Var(\overline{h}(c_i,c_j))
&=\nonumber\ee\left[\left(\overline{h}(c_i,c_j)\right)^2\right]
-\left[\ee\left(\overline{h}(c_i,c_j)\right)\right]^2\\
&=\nonumber\ee\left[\ee\left(\left(\overline{h}(c_i,c_j)\right)^2\big|\FF_{j-1}\right)\right]\\
&=\nonumber\ee\left[\ee\left(\left(\delta_{c_i,c_j}-p_{c_{i}}-p_{c_{j}}+p_{(2)}\right)^2\big|\FF_{j-1}\right)\right]\\
&= p_{(2)}+p_{(2)}^2-2p_{(3)}. \label{b1-1}
\end{align}
From (\ref{3-1}) and (\ref{c}), we get
\beq\label{b1-2}
\frac{1}{a_n^2m^3}\left(\sum_{i=1}^nk_i^2\right)^2\le \frac{\max_{1\le i\le n}k_i^2}{a_n^2m}
=\frac{1}{a_n^2}\left(\frac{\max_{1\le i\le n}k_i}{\sqrt{m}}\right)^2
\le Hn^{-1/4}\log n.
\deq
Combining (\ref{b1-0}), (\ref{b1-1}), (\ref{b1-2}) and Markov's inequality together, it holds that
\begin{align}
\pp\left(\bigg|\frac{1}{2m\delta_n}\sum_{1\leq i<j\leq n}\frac{k_ik_j}{m}\overline{h}(c_i,c_j)\bigg|>\frac{a_n}{2}\right)
\nonumber&\leq \frac{4}{a_n^2}\ee\left(\frac{1}{2m\delta_n}\sum_{1\leq i<j\leq n}\frac{k_ik_j}{m}\overline{h}(c_i,c_j)\right)^2\\
\nonumber&=\frac{1}{a_n^2m^2\delta_n^2}\sum_{1\leq i<j\leq n}\frac{k_i^2k_j^2}{m^2}Var(\overline{h}(c_i,c_j))\\
\nonumber&\le H\frac{\left(\sum_{i=1}^nk_i^2\right)^2}{a_n^2m^3}\\
&\le Hn^{-1/4}\log n ~\label{i-10}
\end{align}
and
\begin{align}
\pp\left(\bigg|\frac{1}{2m\delta_n}\sum_{i=1}^n\frac{k_i^2}{m}(p_{c_{i}}-p_{(2)})\bigg|>\frac{a_n}{2}\right)
\nonumber&\leq \frac{4}{a_n^2}\ee\left(\frac{1}{2m\delta_n}\sum_{i=1}^n\frac{k_i^2}{m}(p_{c_{i}}-p_{(2)})\right)^2\\
\nonumber&=\frac{1}{a_n^2m^2\delta_n^2}\sum_{i=1}^n\frac{k_i^4}{m^2}Var(p_{c_{i}}-p_{(2)})\\
\nonumber&\le H\frac{\left(\sum_{i=1}^nk_i^2\right)^2}{a_n^2m^3}\\
&\le Hn^{-1/4}\log n.~\label{i-11}
\end{align}
which, together with (\ref{aa}), (\ref{c}) and (\ref{h-1}), implies that
\begin{align*}
\sup_{-\infty<x<\infty}\bigg|\pp\left(\frac{Q_n-\mu_n}{\delta_n}\leq x\right)-\Phi(x)\bigg|
\le Hn^{-1/4}\log n.
\end{align*}
\end{proof}

\begin{proof}[Proof of Corollary \ref{cor2-1}]
Applying (\ref{2-3}), (\ref{2-3-1}) and (\ref{cc}) together, we have
\begin{align*}
\frac{Q_n-\mu_n}{\sigma_n}
&=\frac{1}{m\sigma_n}\sum_{1\leq i<j\leq n}B_{ij}\bar{h}(c_i,c_j)-\frac{1}{m\sigma_n}\sum_{i=1}^nB_{ii}(p_{c_{i}}-p_{(2)})\\
&=\frac{1}{m\sigma_n}\sum_{1\leq i<j\leq n}A_{ij}\overline{h}(c_i,c_j)-\frac{1}{2m\sigma_n}\sum_{1\leq i<j\leq n}\frac{k_ik_j}{m}\overline{h}(c_i,c_j)\\
&\ \ \ +\frac{1}{2m\sigma_n}\sum_{i=1}^n\frac{k_i^2}{m}(p_{c_{i}}-p_{(2)}).
\end{align*}
Let
$$
E_n=\frac{1}{m\sigma_n}\sum_{j=1}^n\sum_{i=1}^{j-1}A_{ij}\overline{h}(c_i,c_j)=\frac{1}{m\sigma_n}\sum_{j=1}^nz_{nj},
$$
where $z_{n1}=0$ and $z_{nj}=\sum_{i=1}^{j-1}A_{ij}\overline{h}(c_i,c_j)$ for $2\le j\le n$. Since $\{z_{nj}\}$ forms a martingale difference with respect to $\{\FF_i\}$. Then by Lemma \ref{lem5}, we have
\beq\label{i-a}
\aligned
&\ \ \ \ \sup_{-\infty<x<\infty}\left|\pp\left(\frac{Q_n-\mu_n}{\sigma_n}\leq x\right)-\Phi(x)\right|\\
&\leq\sup_{-\infty<x<\infty}\big|\pp\left(E_n\leq x\right)-\Phi(x)\big|+\frac{a_n}{\sqrt{2\pi}}\\
&\ \ \ \ +\pp\left(\bigg|\frac{1}{2m\sigma_n}\sum_{1\leq i<j\leq n}\frac{k_ik_j}{m}\overline{h}(c_i,c_j)
-\frac{1}{2m\sigma_n}\sum_{i=1}^n\frac{k_i^2}{m}(p_{c_{i}}-p_{(2)})\bigg|>a_n\right),
\endaligned
\deq
where
\beq\label{i-a1}
a_n=n^{-3/8}(\log n)^{-1/2}.
\deq
We consider the first part, applying Lemma \ref{lem2-6} with the following martingale array
$$
E_n=\frac{1}{m\sigma_n}\sum_{j=1}^n\sum_{i=1}^{j-1}A_{ij}\overline{h}(c_i,c_j)=\frac{1}{m\sigma_n}\sum_{j=1}^nz_{nj}.
$$
Denote
$$
L_n^2=\frac{1}{m^2\sigma_n^2}\sum_{j=1}^n\ee\left(z_{nj}^2|\FF_{j-1}\right)
$$
and
$$
r_1=p_{(2)}+p_{(2)}^2-2p_{(3)}, \ \ \ r_2=p_{(3)}-p_{(2)}^2.
$$
Note that by (\ref{2-3-1}) and (\ref{b1}), we deduce
\beq\label{i-1}
\frac{\delta_n^2}{\sigma_n^2}=\frac{r_1}{m\left(\frac{r_1}{2m^2}\sum_{1\leq i\neq j\leq n}B_{ij}^2+\frac{r_2}{m^2}\sum_{i=1}^nB_{ii}^2\right)}
=\frac{r_1}{\frac{r_1}{2m}\sum_{1\leq i\neq j\leq n}B_{ij}^2+\frac{r_2}{m}\sum_{i=1}^nB_{ii}^2},
\deq
then
\beq\label{i-6}
\aligned
\left|\frac{\delta_n^2}{\sigma_n^2}-1\right|
&=\left|\frac{r_1-\frac{r_1}{2m}\sum_{1\leq i\neq j\leq n}B_{ij}^2-\frac{r_2}{m}\sum_{i=1}^nB_{ii}^2}
{\frac{r_1}{2m}\sum_{1\leq i\neq j\leq n}B_{ij}^2+\frac{r_2}{m}\sum_{i=1}^nB_{ii}^2}\right|\\
&=\left|\frac{r_1\left(1-\frac{1}{2m}\sum_{1\leq i\neq j\leq n}B_{ij}^2\right)
-\frac{r_2}{m}\sum_{i=1}^nB_{ii}^2}{\frac{r_1}{2m}\sum_{1\leq i\neq j\leq n}B_{ij}^2
+\frac{r_2}{m}\sum_{i=1}^nB_{ii}^2}\right|\\
&\le \frac{\frac{r_1}{2m}\left|2m-\sum_{1\leq i\neq j\leq n}B_{ij}^2\right|
+\frac{r_2}{m}\sum_{i=1}^nB_{ii}^2}{\frac{r_1}{2m}\sum_{1\leq i\neq j\leq n}B_{ij}^2}.
\endaligned
\deq
It follows, from (\ref{h1}), that
\begin{align*}
\sum_{1\leq i\neq j\leq n}B_{ij}^2
&=\sum_{1\leq i\neq j\leq n}\left(A_{ij}^2+\frac{k_i^2k_j^2}{4m^2}-2A_{ij}\frac{k_ik_j}{2m}\right)\\
&=\sum_{1\leq i\neq j\leq n}A_{ij}+\sum_{1\leq i\neq j\leq n}\frac{k_i^2k_j^2}{4m^2}
-\sum_{1\leq i\neq j\leq n}2A_{ij}\frac{k_ik_j}{2m}\\
&=2m+\sum_{1\leq i\neq j\leq n}\frac{k_i^2k_j^2}{4m^2}
-\sum_{1\leq i\neq j\leq n}A_{ij}\frac{k_ik_j}{m},
\end{align*}
which together with (\ref{h1}), (\ref{3-1}) and the Cauchy-Schwarz inequality yield that
\beq\label{i-7}
\aligned
& \ \ \ \ \frac{r_1}{2m}\left|2m-\sum_{1\leq i\neq j\leq n}B_{ij}^2\right|+\frac{r_2\sum_{i=1}^nB_{ii}^2}{m}\\
&\le \frac{r_1}{2m}\sum_{1\leq i\neq j\leq n}\frac{k_i^2k_j^2}{4m^2}
+\frac{r_1}{2m}\sum_{1\leq i\neq j\leq n}A_{ij}\frac{k_ik_j}{m}+\frac{r_2\sum_{i=1}^nB_{ii}^2}{m}\\
&\le \frac{r_1\left(\sum_{i=1}^nk_i^2\right)^2}{8m^3}+\frac{r_1}{2m^2}\sqrt{\sum_{1\leq i\neq j\leq n}A_{ij}}\sqrt{\sum_{1\leq i\neq j\leq n}k_i^2k_j^2}+\frac{r_2}{m}\sum_{i=1}^n\frac{k_i^4}{4m^2}\\
&\le \frac{r_1\left(\sum_{i=1}^nk_i^2\right)^2}{8m^3}
+\sqrt{\frac{2r_1^2\left(\sum_{i=1}^nk_i^2\right)^2}{m^3}}
+\frac{r_2\left(\sum_{i=1}^nk_i^2\right)^2}{4m^3}\\
&\le \frac{r_1}{2}\frac{\max_{1\le j\le n}k_j^2}{m}+\sqrt{2}r_1\frac{\max_{1\le j\le n}k_j}{\sqrt{m}}+r_2\frac{\max_{1\le j\le n}k_j^2}{m}\\
&\le H\frac{\max_{1\le j\le n}k_j}{\sqrt{m}}\le Hn^{-1/2}.
\endaligned
\deq
Since $Var(\bar{h}(c_1,c_2))=p_{(2)}+p_{(2)}^2-2p_{(3)}=r_1$ and $Var(p_{c_{1}}-p_{(2)})=p_{(3)}-p_{(2)}^2=r_2$, then from (\ref{i-7}), we obtain
\beq\label{i-31}
\lim_{n\to\infty}\frac{\sum_{1\leq i\neq j\leq n}B_{ij}^2}{2m}=1 \ \ \ \text{and} \ \ \ \lim_{n\to\infty}\frac{\sum_{i=1}^nB_{ii}^2}{m}=0.
\deq
Substituting (\ref{i-31}) into (\ref{i-1}) yields
$$
\lim_{n\to\infty}\frac{\delta_n^2}{\sigma_n^2}=1,
$$
then for any $\varepsilon>0$, there exists a positive integer $N$, such that for every $n>N$, we deduced that
$$
\left|\frac{\delta_n^2}{\sigma_n^2}-1\right|<\varepsilon.
$$
Therefore, it is straightforward that
\beq\label{i-22}
\frac{\delta_n^2}{\sigma_n^2}\le \max\left\{1+\varepsilon,\frac{\delta_i^2}{\sigma_i^2},1\le i\le N\right\}.
\deq
Firstly, we will check the condition (\ref{2-6-1}) of Lemma \ref{lem2-6}. By applying (\ref{3-1}), (\ref{i-22}) and the fact $\big|\overline{h}(c_i,c_j)\big|\le2$, we can derive
\begin{align*}
\frac{\max_{1\le j\le n}z_{nj}}{m\sigma_n}
&=\frac{\delta_n}{\sigma_n}\frac{\max_{1\le j\le n}z_{nj}}{m\delta_n}
=\frac{\delta_n}{\sigma_n}\max_{1\le j\le n}\Bigg|\sum_{i=1}^{j-1}\frac{A_{ij}\overline{h}(c_i,c_j)}{m\delta_n}\Bigg|\\
&\le \frac{2\delta_n}{\sigma_n}\max_{1\le j\le n}\sum_{i=1}^{j-1}\frac{A_{ij}}{m\delta_n}
\le H\frac{\max_{1\le j\le n}k_j}{m\delta_n}\le Hn^{-1/2}.
\end{align*}
From (\ref{i-31}), then for any $\varepsilon>0$, there exists a positive integer $N$, such that for every $n>N$, we have
$$
\left|\frac{\sum_{1\leq i\neq j\leq n}B_{ij}^2}{2m}-1\right|<\varepsilon,
$$
which implies that
\beq\label{i-4}
\frac{\sum_{1\leq i\neq j\leq n}B_{ij}^2}{2m}\ge \min\left\{1-\varepsilon,\frac{\sum_{1\leq i\neq j\leq k}B_{ij}^2}{2m},1\le k\le N\right\}.
\deq
Combining (\ref{i-6}), (\ref{i-7}) and (\ref{i-4}), it holds that
\beq\label{i-8}
\left|\frac{\delta_n^2}{\sigma_n^2}-1\right|\le Hn^{-1/2}.
\deq
Next, we will check the condition (\ref{2-8}) of Lemma \ref{lem2-6}. It follows, from (\ref{i-9}), (\ref{i-22}) and (\ref{i-8}), that
\begin{align*}
&\ \ \ \ \pp\left(|L_{n}^2-1|>Hn^{-1/2}(\log n)^2\right)\\
&=\pp\left(\left|\frac{1}{m^2\sigma_n^2}\sum_{j=1}^n\ee\left(z_{nj}^2|\FF_{j-1}\right)-1\right|
>Hn^{-1/2}(\log n)^2\right)\\
&=\pp\left(\left|\frac{\delta_n^2}{\sigma_n^2}\frac{1}{m^2\delta_n^2}
\sum_{j=1}^n\ee\left(z_{nj}^2|\FF_{j-1}\right)-\frac{\delta_n^2}{\sigma_n^2}
+\frac{\delta_n^2}{\sigma_n^2}-1\right|>Hn^{-1/2}(\log n)^2\right)\\
&\le \pp\left(\frac{\delta_n^2}{\sigma_n^2}\left|\frac{1}{m^2\delta_n^2}
\sum_{j=1}^n\ee\left(z_{nj}^2|\FF_{j-1}\right)-1\right|
+\left|\frac{\delta_n^2}{\sigma_n^2}-1\right|>Hn^{-1/2}(\log n)^2\right)\\
&=\pp\left(\frac{\delta_n^2}{\sigma_n^2}\left|V_n^2-1\right|
+\left|\frac{\delta_n^2}{\sigma_n^2}-1\right|>Hn^{-1/2}(\log n)^2\right)\\
&\le \pp\left(\left|V_n^2-1\right|>Hn^{-1/2}(\log n)^2\right)\\
&\le Hn^{-1/4}\log n.
\end{align*}
Hence, we deduce
\beq\label{i-a2}
\sup_{-\infty<x<\infty}\big|\pp(E_{n}\le x)-\Phi(x)\big|\le Hn^{-1/4}\log n.
\deq
Now, we consider the last past,
\begin{align*}
&\ \ \ \ \pp\left(\bigg|\frac{1}{2m\sigma_n}\sum_{1\leq i<j\leq n}\frac{k_ik_j}{m}\overline{h}(c_i,c_j)
-\frac{1}{2m\sigma_n}\sum_{i=1}^n\frac{k_i^2}{m}(p_{c_{i}}-p_{(2)})\bigg|>a_n\right)\\
&\leq\pp\left(\bigg|\frac{1}{2m\sigma_n}\sum_{1\leq i<j\leq n}\frac{k_ik_j}{m}\overline{h}(c_i,c_j)\bigg|>\frac{a_n}{2}\right)\\
&\ \ \ +\pp\left(\bigg|\frac{1}{2m\sigma_n}\sum_{i=1}^n\frac{k_i^2}{m}
(p_{c_{i}}-p_{(2)})\bigg|>\frac{a_n}{2}\right).
\end{align*}
Applying (\ref{i-10}), (\ref{i-11}) and (\ref{i-22}), we have
\begin{align*}
\pp\left(\bigg|\frac{1}{2m\sigma_n}\sum_{1\leq i<j\leq n}\frac{k_ik_j}{m}\overline{h}(c_i,c_j)\bigg|>\frac{a_n}{2}\right)
&=\pp\left(\bigg|\frac{\delta_n}{\sigma_n}\frac{1}{2m\delta_n}\sum_{1\leq i<j\leq n}\frac{k_ik_j}{m}\overline{h}(c_i,c_j)\bigg|>\frac{a_n}{2}\right)\\
&\le \pp\left(\bigg|\frac{1}{2m\delta_n}\sum_{1\leq i<j\leq n}\frac{k_ik_j}{m}\overline{h}(c_i,c_j)\bigg|>H a_n \right)\\
&\le Hn^{-1/4}\log n
\end{align*}
and
\begin{align*}
\pp\left(\bigg|\frac{1}{2m\sigma_n}\sum_{i=1}^n\frac{k_i^2}{m}
(p_{c_{i}}-p_{(2)})\bigg|>\frac{a_n}{2}\right)
&=\pp\left(\bigg|\frac{\delta_n}{\sigma_n}\frac{1}{2m\delta_n}\sum_{i=1}^n\frac{k_i^2}{m}
(p_{c_{i}}-p_{(2)})\bigg|>\frac{a_n}{2}\right)\\
&\le \pp\left(\bigg|\frac{1}{2m\delta_n}\sum_{i=1}^n\frac{k_i^2}{m}
(p_{c_{i}}-p_{(2)})\bigg|>H a_n \right)\\
&\le Hn^{-1/4}\log n,
\end{align*}
which, together with (\ref{i-a}), (\ref{i-a1}) and (\ref{i-a2}), implies that
$$
\sup_{-\infty<x<\infty}\left|\pp\left(\frac{Q_n-\mu_n}{\sigma_n}\leq x\right)-\Phi(x)\right|
\le Hn^{-1/4}\log n.
$$
\end{proof}

\begin{proof}[Proof of Theorem \ref{thm2-2}]
Putting (\ref{cc}) and (\ref{2-3}) together, we deduce that
\begin{align}
\big|b_n\left(Q_n-\mu_n\right)\big|
&=\nonumber\Bigg|\frac{b_n}{m}\sum_{1\leq i<j\leq n}B_{ij}\bar{h}(c_i,c_j)-\frac{b_n}{m}\sum_{i=1}^nB_{ii}(p_{c_{i}}-p_{(2)})\Bigg|\\
&=\nonumber\Bigg|\frac{b_n}{m}\sum_{1\leq i<j\leq n}A_{ij}\overline{h}(c_i,c_j)-\frac{b_n}{m}\sum_{1\leq i<j\leq n}\frac{k_ik_j}{2m}\overline{h}(c_i,c_j)\\
&\ \ \ \nonumber +\frac{b_n}{m}\sum_{i=1}^n\frac{k_i^2}{2m}(p_{c_{i}}-p_{(2)})\Bigg|\\
&\le \nonumber\Bigg|\frac{b_n}{m}\sum_{1\leq i<j\leq n}A_{ij}\overline{h}(c_i,c_j)\bigg|+\bigg|\frac{b_n}{m}\sum_{1\leq i<j\leq
n}\frac{k_ik_j}{2m}\overline{h}(c_i,c_j)\Bigg|\\
&\ \ \  +\Bigg|\frac{b_n}{m}\sum_{i=1}^n\frac{k_i^2}{2m}(p_{c_{i}}-p_{(2)})\Bigg|. ~\label{c1-0}
\end{align}
From (\ref{3-1}), (\ref{p}) and the fact $|p_{c_{i}}-p_{(2)}|\le1$, for $n\to\infty$, we have
\beq\label{c1-1}
\Bigg|\frac{b_n}{m}\sum_{i=1}^n\frac{k_i^2}{2m}(p_{c_{i}}-p_{(2)})\Bigg|\le \frac{b_n}{m}\frac{2m\max_{1\le i\le n}k_i}{2m}\\
=\frac{b_n\max_{1\le i\le n}k_i}{m}\to0.
\deq
Hence, (\ref{e}) follows immediately from (\ref{c1-0}) and (\ref{c1-1}) if
$$
\frac{b_n}{m}\sum_{1\leq i<j\leq n}A_{ij}\overline{h}(c_i,c_j)\xrightarrow{a.s.}0
$$
and
$$
\frac{b_n}{m}\sum_{1\leq i<j\leq n}\frac{k_ik_j}{2m}\overline{h}(c_i,c_j)\xrightarrow{a.s.}0.
$$
Firstly, we need to verify the condition $x>8eD_n$ of Lemma \ref{lem2-3-1}. From the notation of Lemma \ref{lem2-3-1} and the fact $\big|\bar{h}(c_i,c_j)\big|\le 2$, we have
$$
\big|A_{ij}\bar{h}(c_i,c_j)\big|\le 2A_{ij}=:a_{ij},
$$
then
\beq\label{d1-0}
D_n=\left(\sum_{1\le i<j\le n}a_{ij}^2\right)^{1/2}=\left(\sum_{1\le i<j\le n}\left(2A_{ij}\right)^2\right)^{1/2}.
\deq
Putting (\ref{d1-0}) and (\ref{p}) together, for $n\to\infty$, we obtain
$$
\frac{mr}{8eb_nD_n}=\frac{mr}{8eb_n\left(\sum_{1\le i<j\le n}(2A_{ij})^2\right)^{1/2}}
=\frac{mr}{32eb_n\left(\sum_{1\le i<j\le n}A_{ij}^2\right)^{1/2}}\ge H\frac{\sqrt{m}}{b_n}
\to\infty.
$$
Then, the condition $mr/b_n>8eD_n$ holds. By using (\ref{p}), we deduce for any $r>0$,
\begin{align*}
\sum_{n=1}^{\infty}\pp\left(\frac{b_n}{m}\bigg|\sum_{1\leq i<j\leq n}A_{ij}\overline{h}(c_i,c_j)\bigg|>r\right)
&\le \sum_{n=1}^{\infty}\exp\left(-\frac{mr}{8eb_n\left(\sum_{1\le i<j\le n}A_{ij}^2\right)^{1/2}}\right)\\
&\le \sum_{n=1}^{\infty}\exp\left(-\frac{\sqrt{m}r}{8eb_n}\right)\\
&< \sum_{n=1}^{\infty}\exp(-t\log n)\\
&= \sum_{n=1}^{\infty}n^{-t}<\infty,
\end{align*}
where $t$ is a constant and $t>1$.
Using the Borel-Cantelli lemma, we have
$$
\frac{b_n}{m}\sum_{1\leq i<j\leq n}A_{ij}\overline{h}(c_i,c_j)\xrightarrow{a.s.}0.
$$
In addition, combining (\ref{3-1}), (\ref{p}) and Markov's inequality together, we obtain for any $r>0$,
\begin{align*}
\sum_{n=1}^{\infty}\pp\left(\frac{b_n}{m}\bigg|\sum_{1\le i<j\le n}\frac{k_ik_j}{2m}\overline{h}(c_i,c_j)\bigg|>r\right)
&\le \sum_{n=1}^{\infty}\frac{b_n^2}{m^2r^2}\ee\left(\sum_{1\leq i<j\leq n}\frac{k_ik_j}{2m}\overline{h}(c_i,c_j)\right)^2\\
&\le \sum_{n=1}^{\infty}\frac{b_n^2}{m^2r^2}\sum_{1\leq i<j\leq n}\frac{k_i^2k_j^2}{4m^2}Var\left(\overline{h}(c_i,c_j)\right)\\
&\le H\sum_{n=1}^{\infty}\frac{b_n^2}{m^2}\frac{\left(\sum_{i=1}^nk_i^2\right)^2}{m^2}\\
&\le H\sum_{n=1}^{\infty}\frac{b_n^2}{m}\frac{\max_{1\le i\le n}k_i^2}{m}\\
&< H\sum_{n=2}^{\infty}\frac{1}{n(\log n)^2}<\infty,
\end{align*}
which implies that
$$
\frac{b_n}{m}\sum_{1\leq i<j\leq n}\frac{k_ik_j}{2m}\overline{h}(c_i,c_j)\xrightarrow{a.s.}0.
$$
Hence, this completes the proof of Theorem \ref{thm2-2}.
\end{proof}

\end{document}